\newtheorem{theorem}{Theorem}[section]
\newtheorem{prop}{Proposition}[section]
\newtheorem{lemma}{Lemma}[section]
\newtheorem{cor}{Corollary}[section]
\newtheorem{remark}{Remark}[section]
\theoremstyle{definition}
\newtheorem{defn}{Definition}[section]
\newtheorem{ex}{Example}[section]
\newtheorem{main}{Theorem}
\newcommand{\La}{\mathcal L_{\eta}}
\begin{document}

\title{Invariant measures, matching and the frequency of 0 for signed binary expansions}
\author{Karma Dajani and Charlene Kalle}
\address[Karma Dajani]{Department of Mathematics, Utrecht University, P.O.~Box 80010, 3508TA Utrecht, the Netherlands}
\email[Karma Dajani]{k.dajani1@uu.nl}
\address[Charlene Kalle]{Mathematisch Instituut, Leiden University, Niels Bohrweg 1, 2333CA Leiden, The Netherlands}
\email[Charlene Kalle]{kallecccj@math.leidenuniv.nl}

\thanks{The second author was partially supported by the NWO Veni-grant 639.031.140. We thank Niels Langeveld for producing Figure~\ref{f:maximalinterval}}

\date{Version of \today}

\subjclass[2010]{37E05, 28D05, 37E15, 37A45, 37A05}
\keywords{symmetric doubling map, binary expansions, matching, interval map, invariant measure, digit frequency}

\maketitle
 
\begin{abstract}
We introduce a parametrised family of maps $\{S_{\eta}\}_{\eta \in [1,2]}$, called symmetric doubling maps, defined on $[-1,1]$ by $S_\eta (x)=2x-d\eta$, where $d\in \{-1,0,1 \}$. Each map $S_\eta$ generates binary expansions with digits $-1$, 0 and 1. We study the frequency of the digit 0 in typical expansions as a function of the parameter $\eta$. The transformations $S_\eta$ have a natural ergodic invariant measure $\mu_\eta$ that is absolutely continuous with respect to Lebesgue measure. The frequency of the digit 0 is related to the measure $\mu_{\eta}([-\frac12,\frac12])$ by the Ergodic Theorem. We show that the density of $\mu_\eta$ is piecewise smooth except for a set of parameters of zero Lebesgue measure and full Hausdorff dimension and give a full description of the structure of the maximal parameter intervals on which the density is piecewise smooth. We give an explicit formula for the frequency of the digit 0 in typical signed binary expansions on each of these parameter intervals and show that this frequency depends continuously on the parameter $\eta$. Moreover, it takes the value $\frac23$ only on the interval $\big[ \frac65, \frac32\big]$ and it is strictly less than $\frac23$ on the remainder of the parameter space.
\end{abstract}

\section{Introduction}
Binary expansions, and more generally digital expansions with an integer base and a fixed set of digits, are used in a variety of applications. For a fixed base $N \in \mathbb N_{\ge 2}$ and a finite set $A \subseteq \mathbb Z$ of digits, expressions of the form
\[ x = \sum_{k \ge 1} \frac{b_k}{N^k}, \quad b_k \in A,\]
are called {\em expansions in base $N$ with digits in $A$}. The amount of different representations that numbers have in a certain base depends on the choice of $A$. For example, when using base 2 with digit set $\{0,1\}$ all numbers in the unit interval have a unique binary expansion except for the dyadic rationals, which have precisely two. On the other hand, most numbers have uncountably many different representations in base 2 if we allow digits from $\{-1,0,1\}$. This last property can be useful for technological applications, since it provides the freedom to choose an expansion from a collection of different representations of the same number. For example, from the perspective of public key cryptography using elliptic curves there is an interest in expansions of integers having the lowest number of non-zero digits (see for example \cite{MO90,KT93,CMO98}). In case the digit set is $\{-1,0,1\}$ this is equivalent to having the lowest possible sum $\sum_{k \ge 1} |b_k|$ or lowest {\em Hamming weight}. The advantages of using digit set $\{-1,0, 1\}$ instead of $\{0,1\}$ for binary representations have been well known since the work of Reitwiesner (\cite{Rei60}) and the so called minimal weight expansions have been well studied in literature, since they lead to faster computations in the encoding process (see for example \cite{LK97,GH06,HP06,TV15} and the references therein). 

\vskip .2cm
In this article we propose a dynamical viewpoint to study minimal weight signed binary expansions. We introduce a family of maps $\{ S_\eta:[-1,1] \to [-1,1] \}_{\eta \in [1,2]}$ by setting for each $x \in [-1,1]$ the digit
\[ d_1(x) =
\begin{cases}
-1, & \text{if } x < -\frac12,\\
0, & \text{if } x \in \big[ -\frac12, \frac12\big],\\
1, & \text{if } x > -\frac12,
\end{cases}\]
and defining for each $\eta \in [1,2]$,
\begin{equation}\label{q:seta}
S_\eta(x) = 2x - d_1(x)\eta.
\end{equation}
If we now define for each $n \ge 1$ the digit $d_{\eta,n}(x) = d_1(S_\eta^{n-1}(x))$, then we can write
\begin{equation}\label{q:setan}
S_\eta^n(x) = 2S_\eta^{n-1}(x) - d_{\eta,n}(x)\eta
\end{equation}
and obtain a {\em signed binary expansion} of $x$ by
\[ x = \sum_{n \ge 1} \frac{d_{\eta, n}(x)\eta}{2^n} = \eta \sum_{n \ge 1} \frac{d_{\eta,n}(x)}{2^n},\]
or equivalently by the sequence $(d_{\eta,n}(x))_{n \ge 1}$. We call the maps $S_\eta$ {\em symmetric doubling maps}. Figure~\ref{f:variousalpha} shows the graph of $S_{\eta}$ for various $\eta$'s.
\begin{figure}[h]
\centering
\subfigure[$\eta =2$]{
\begin{tikzpicture}[scale=.9]
\draw(-1,-1)node[below]{\small -$1$}--(-.5,-1)node[below]{\small -$\frac12$}--(0,-1)node[below]{\small$0$}--(.5,-1)node[below]{\small $\frac12$}--(1,-1)node[below]{\small 1}--(1,.27)node[right]{\small \color{white}$2-\eta$}--(1,1)--(-1,1)node[left]{\small $1$}--(-1,-.27)node[left]{\small \color{white}$\eta-2$}--(-1,0)node[left]{\small $0$}--(-1,-1)node[left]{\small -1};
\draw[thick, purple!50!black](-1,0)--(-.5,1) (-.5,-1)--(.5,1)(.5,-1)--(1,0);
\draw[dotted](-1,-1)--(1,1)(-.5,-1)--(-.5,1)(.5,-1)--(.5,1);
\draw(-1,0)--(1,0)(0,-1)--(0,1);
\end{tikzpicture}}
\quad
\subfigure[$\eta =\sqrt 3$]{\begin{tikzpicture}[scale=.9]
\draw(-1,-1)node[below]{\small -$1$}--(0,-1)node[below]{\small$0$}--(.5,-1)node[below]{\small \color{white}$\frac12$}--(1,-1)node[below]{\small 1}--(1,-.73)node[right]{\small $1-\eta$}--(1,.27)node[right]{\small $2-\eta$}--(1,1)--(-1,1)node[left]{\small \color{white} $1$}--(-1,.73)node[left]{\small $\eta -1$}--(-1,0)node[left]{\small \color{white} $0$}--(-1,-.27)node[left]{\small $\eta-2$}--(-1,-1);
\draw[thick, purple!50!black](-1,-.27)--(-.5,.73) (-.5,-1)--(.5,1)(.5,-.73)--(1,.27);
\draw[dotted](-1,-1)--(1,1)(-.5,-1)--(-.5,1)(.5,-1)--(.5,1)(.5,-.73)--(1,-.73)(-1,.73)--(-.5,.73);
\draw(-1,0)--(1,0)(0,-1)--(0,1);
\draw[dotted, red, thick](1,.27)--(.27,.27)--(.27,0)(.5,-.73)--(-.73,-.73)--(-.73,.27)--(.27,.27);
\end{tikzpicture}}
\quad
\subfigure[$\eta =\frac32$]{\begin{tikzpicture}[scale=.9]
\draw(-1,-1)node[below]{\small -$1$}--(0,-1)node[below]{\small$0$}--(.5,-1)node[below]{\small \color{white}$\frac12$}--(1,-1)node[below]{\small 1}--(1,.5)node[right]{\small $\frac12$}--(1,1)--(-1,1)node[left]{\small \color{white}$1$}--(-1,-.5)node[left]{\small -$\frac12$}--(-1,-1);
\draw[thick, purple!50!black](-1,-.5)--(-.5,.5) (-.5,-1)--(.5,1)(.5,-.5)--(1,.5);
\draw[dotted](-1,-1)--(1,1)(-.5,-1)--(-.5,1)(.5,-1)--(.5,1);
\draw[dotted, red, thick](-.5,.5)--(1,.5)(-1,-.5)--(.5,-.5);
\draw(-1,0)--(1,0)(0,-1)--(0,1);
\end{tikzpicture}}
\quad
\subfigure[$\eta =\frac43$]{\begin{tikzpicture}[scale=.9]
\draw(-1,-1)node[below]{\small -$1$}--(0,-1)node[below]{\small$0$}--(.5,-1)node[below]{\small \color{white}$\frac12$}--(1,-1)node[below]{\small 1}--(1,-.33)node[right]{\small -$\frac13$}--(1,.5)node[right]{\small \color{white}$2-\eta$}--(1,.67)node[right]{\small $\frac23$}--(1,1)--(-1,1)node[left]{\small \color{white}$1$}--(-1,.33)node[left]{\small $\frac13$}--(-1,0)node[left]{\small \color{white}$\eta-2$}--(-1,-.67)node[left]{\small -$\frac23$}--(-1,-1);
\draw[thick, purple!50!black](-1,-.67)--(-.5,.33) (-.5,-1)--(.5,1)(.5,-.33)--(1,.67);
\draw[dotted](-1,-1)--(1,1)(-.5,-1)--(-.5,1)(.5,-1)--(.5,1)(.5,-.33)--(1,-.33)(-1,.33)--(-.5,.33);
\draw(-1,0)--(1,0)(0,-1)--(0,1);
\draw[dotted, red, thick](1,.67)--(.67,.67)--(.67,0)(.5,-.33)--(-.33,-.33)--(-.33,-.67)--(-.67,-.67)--(-.67,0);
\end{tikzpicture}}
\quad
\subfigure[$\eta =\frac65$]{\begin{tikzpicture}[scale=.9]
\draw(-1,-1)node[below]{\small -$1$}--(0,-1)node[below]{\small$0$}--(.5,-1)node[below]{\small \color{white}$\frac12$}--(1,-1)node[below]{\small 1}--(1,-.2)node[right]{\small -$\frac15$}--(1,.2)node[right]{\small \color{white}$2-\eta$}--(1,.8)node[right]{\small $\frac45$}--(1,1)--(-1,1)node[left]{\small \color{white}$1$}--(-1,.2)node[left]{\small $\frac15$}--(-1,-.2)node[left]{\small \color{white} $\eta-2$}--(-1,-.8)node[left]{\small -$\frac45$}--(-1,-1);
\draw[thick, purple!50!black](-1,-.8)--(-.5,.2) (-.5,-1)--(.5,1)(.5,-.2)--(1,.8);
\draw[dotted](-1,-1)--(1,1)(-.5,-1)--(-.5,1)(.5,-1)--(.5,1)(.5,-.2)--(1,-.2)(-1,.2)--(-.5,.2);
\draw(-1,0)--(1,0)(0,-1)--(0,1);
\draw[dotted, red, thick](1,.8)--(.8,.8)--(.8,.4)--(.4,.4)--(.4,.8)--(.8,.8)(.5,-.2)--(-.2,-.2)--(-.2,-.4)--(-.4,-.4)--(-.4,-.8)--(-.8,-.8)--(-.8,-.4)--(-.4,-.4);
\end{tikzpicture}}
\quad
\subfigure[$\eta=1$]{\begin{tikzpicture}[scale=.9]
\draw(-1,-1)node[below]{\small -$1$}--(0,-1)node[below]{\small$0$}--(.5,-1)node[below]{\small \color{white}$\frac12$}--(1,-1)node[below]{\small 1}--(1,1)--(-1,1)node[left]{\small \color{white}$1$}--(-1,0)node[left]{\small \color{white}-$\frac12$}--(-1,-1);
\draw[thick, purple!50!black](-1,-1)--(-.5,0) (-.5,-1)--(.5,1)(.5,0)--(1,1);
\draw[dotted](-1,-1)--(1,1)(-.5,-1)--(-.5,1)(.5,-1)--(.5,1);
\draw(-1,0)--(1,0)(0,-1)--(0,1);
\end{tikzpicture}}
\caption{The symmetric doubling map for various values of $\eta$. The red dotted lines indicate the orbits of 1 and $1-\eta$.}
\label{f:variousalpha}
\end{figure}
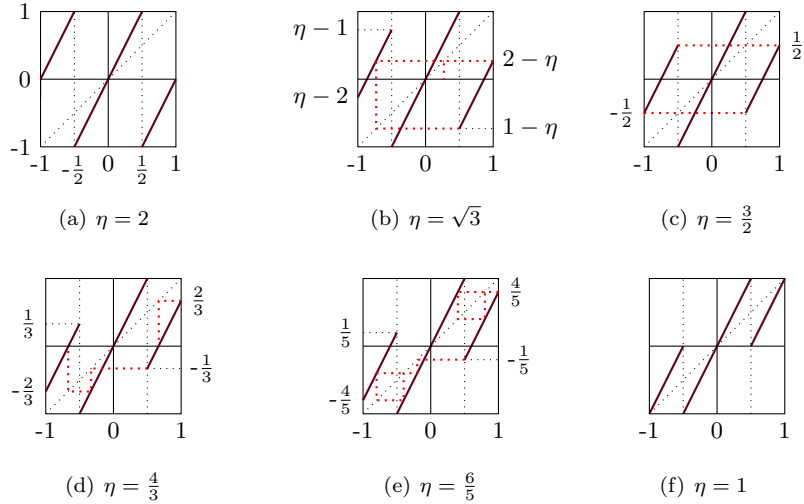

\vskip .2cm
The digit 0 occurs in position $n$ precisely when $S_\eta^{n-1}(x) \in \big[ -\frac12, \frac12\big]$. It follows easily from the literature, see for example \cite{Kop90}, that each map $S_\eta$ has a unique ergodic invariant measure $\mu_\eta$ absolutely continuous with respect to Lebesgue measure. Then by Birkhoff's Ergodic Theorem the frequency of the digit 0 in almost all of the corresponding signed binary expansions equals the $\mu_\eta$-measure of the interval $\big[-\frac12, \frac12 \big]$. The goal of this article is to find a good expression for the invariant probability density $f_\eta$ of $\mu_\eta$, so that we can calculate $\mu_\eta\big( \big[ -\frac12, \frac12\big] \big)$ and study its dependence on $\eta$.

\vskip .2cm
Results from \cite{Kop90} imply that in general $f_\eta$ is an infinite sum of indicator functions, but that $f_\eta$ becomes piecewise smooth if the map $S_\eta$ exhibits the dynamical phenomenon of matching. Matching for interval maps has received quite a lot of attention recently, especially in the case of continued fraction transformations, see \cite{NN08,DKS09,CMPT10,CT12,KSS12,BCIT13,BORG13,CT13,CM,BCK,BCMP} for example. It is the property that for each critical point the orbits of the left and right limits meet after a finite number of iterations and that the derivatives of both orbits are also equal at that time. Due to symmetry and the constant slope, for our family of maps $\{ S_\eta : [-1, 1] \to [-1,1]\}_{\eta \in [1,2]}$ we say that $S_\eta$ {\em has matching at time $m$} if $m \ge 1$ is the minimal number of iterations, such that
\[ S_\eta^m(1) = S_\eta^{m+1} \Big( \frac12^- \Big) = S_\eta^{m+1} \Big( \frac12^+ \Big) = S_\eta^m(1-\eta).\]
The power $m$ is called the {\em matching index} of $S_\eta$. In this article we give a complete description of the matching behaviour of the family $\{ S_\eta:[-1,1] \to [-1,1] \}_{\eta \in [1,2]}$, i.e., we prove the following.
\begin{main}\label{t:mintervals}
Up to a set of zero Lebesgue measure and full Hausdorff dimension, the parameter space $[1,2]$ is divided into intervals of parameters $\eta$ on which the density of the absolutely continuous invariant measure $\mu_\eta$ is piecewise smooth with the same number of jumps. These intervals are uniquely determined by the initial parts of the sequences $(d_{\eta,n}(1))_{n \ge 1}$ for $\eta$ in the interval. 
\end{main}

The intervals from Theorem~\ref{t:mintervals} are called {\em matching intervals}. If the matching index on a matching interval is $m$, then for any $\eta, \eta'$ in it we have $d_{\eta,n}(1)= d_{\eta',n}(1)$ for all $1 \le n \le m$. By close investigation of the density function of the measure $\mu_\eta$, we get the following result.
\begin{main}\label{t:23}
The map $\eta \mapsto \mu_\eta \big( \big[ -\frac12, \frac12 \big] \big)$ is continuous on $[1,2]$ and monotone on each matching interval. More precisely, if $J \subseteq [1,2]$ is a matching interval on which the matching index is $m$, then for all $\eta \in J$
\[ \mu_\eta \Big( \Big[ -\frac12, \frac12 \Big] \Big) = \frac{2^{m-1}}{2^m-1} \Big( \frac{c}{\eta} +K \Big),\]
where $c$ and $K$ are explicitly given constants depending only on $d_{\eta,n}(1)$, $1 \le n \le m-1$. 
\end{main}

On the maximal value of $ \mu_\eta \big( \big[ -\frac12, \frac12 \big] \big)$ and the frequency of the digit 0 in the signed binary expansions we have the following result.
\begin{main}\label{t:max}
It holds that $\mu_\eta \big( \big[ -\frac12, \frac12 \big] \big) \le \frac23 $ for any $\eta \in [1,2]$ and
$\mu_\eta \big( \big[ -\frac12, \frac12 \big] \big) = \frac23$ if and only if $\eta \in \big[ \frac65, \frac32 \big]$. Equivalently, for any $\eta \in [1,2]$ the frequency of the digit 0 in a typical signed binary expansion obtained from the map $S_\eta$ is at most $\frac23$ and this maximum is obtained for typical expansions if and only if $\eta \in \big[ \frac65, \frac32 \big]$. 
\end{main}



This theorem extends the results from \cite{DKL06}, where the authors focused on the signed binary expansions produced by the specific map $S_{\frac32}$ and obtained that $\mu_{\frac32} \big( \big[-\frac12, \frac12 \big] \big)=\frac23$. It is also the analogue in the context of symmetric doubling maps of the first open question mentioned in \cite{KSS12} in relation to the maximality of the entropy of Nakada's $\alpha$-continued fraction maps. To our knowledge this open question is still unanswered for the $\alpha$-continued fractions, but Theorem~\ref{t:max} gives a positive answer for the piecewise linear analogue.

\vskip .2cm
An ingredient in the proof of these results is a correspondence between the maps $S_\eta$, the doubling map, the tent map, the Gauss map and Nakada's $\alpha$-continued fraction maps. As an immediate consequence of this correspondence we obtain a lot of information on the subset $\mathcal N \subseteq [1,2]$ of those parameters $\eta$ for which $S_\eta$ does not have matching. In particular we will see that, besides being a Lebesgue null set of full Hausdorff dimension, $\mathcal N$ is also totally disconnected and closed and that to each matching interval for the family $\{ S_\eta\}_{\eta \in [1,2]}$ there corresponds an element in the set $\mathcal N$ that is transcendental. Moreover, the set $\mathcal N$ is related to interesting subsets of the parameter space of several other one-parameter families of dynamical systems as explained in Remark~\ref{r:othersets}.

\vskip .2cm
The article is organised as follows. In the second section we introduce notation regarding digit sequences and provide the necessary background on the doubling, tent and Gauss map. We then prove that $S_\eta$ has matching for Lebesgue almost all $\eta \in [1,2]$ and link the non-matching set $\mathcal N$ to another set related to the tent map. In the third section we describe the connection between the symmetric doubling maps and the $\alpha$-continued fraction maps and prove Theorem~\ref{t:mintervals}. 
In the fourth section we closely examine the density function $f_\eta$ on the matching intervals and prove Theorem~\ref{t:23}. In the last section we prove Theorem~\ref{t:max}.

\section{First results on matching and the frequency of 0}

\subsection{Preliminaries on sequences and dynamical systems}\label{ss:dynsys}
We will often switch between the dynamics of the systems on the interval and their symbolic codings. Therefore, we first introduce some notation regarding sequences of digits. For a finite or countable alphabet $\mathcal A \subseteq \mathbb Z$ let $\mathcal A^*$ denote the set of all finite strings of symbols from $\mathcal A$, called {\em words}, and let $\varepsilon$ denote the empty word. For any word $w_1 \cdots w_m \in \mathcal A^*$ and any $1 \le i < j \le m$ write $w_i^j := w_i \cdots w_j$. The set of one-sided infinite sequences of symbols from $\mathcal A$ is denoted by $\mathcal A^\mathbb N$ and on this set the {\em left shift}, denoted by $\sigma :\mathcal A^\mathbb N \to \mathcal A^\mathbb N$, is given by $\sigma (b_n)_{n \ge 1} = (b_{n+1})_{n \ge 1}$. On words and sequences we consider the lexicographical ordering denoted by $\prec$. Since the orbit of 1 under $S_\eta$ is of particular importance, we denote the corresponding digit sequence by
\begin{equation}\label{q:d1}
d_\eta := (d_{\eta,n}(1))_{n \ge 1}.
\end{equation}
The next lemma on signed binary expansions will be of use later.
\begin{lemma}\label{l:order}
Let $\eta_1,\eta_2\in (1,2)$, then $\eta_1 < \eta_2$ if and only if $d_{\eta_2} \prec d_{\eta_1}$.
\end{lemma}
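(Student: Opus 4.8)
The plan is to track the orbit of the point $1$ as $\alpha$ varies and to show that the assignment $\alpha \mapsto d_\alpha$ reverses the lexicographic order. The main tool is the explicit orbit formula (\ref{q:Torbit}). Writing $D_n(\alpha) = \sum_{k=1}^n d_{\alpha,k} 2^{n-k}$, it gives for every $n \ge 1$
\[
S_\alpha^n(1) = 2^n - \alpha\, D_n(\alpha), \qquad\text{so}\qquad D_n(\alpha) = \frac{2^n - S_\alpha^n(1)}{\alpha}.
\]
Since $S_\alpha^n(1) \in [-1,1]$ and $\alpha < 2$, each $D_n(\alpha)$ is a positive integer and $D_n(\alpha) \to \infty$ as $n \to \infty$. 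Observe also that $D_n(\alpha)$ depends only on the first $n$ digits $d_{\alpha,1},\dots,d_{\alpha,n}$.

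First I would prove the key monotonicity statement. Suppose $d_{\alpha_1,k} = d_{\alpha_2,k}$ for all $k \le n$. Then $D_n(\alpha_1) = D_n(\alpha_2)$, and for $\alpha_1 < \alpha_2$ the formula above yields
\[
S_{\alpha_1}^n(1) - S_{\alpha_2}^n(1) = (\alpha_2 - \alpha_1)\, D_n(\alpha_1) > 0 .
\]
Thus, as long as the two digit sequences agree, the orbit of $1$ under the smaller parameter stays strictly to the right of the orbit under the larger parameter. I would pair this with the elementary fact that the digit assignment in (\ref{q:sdigit}), read as a function of $S_\alpha^n(1)$, is non-decreasing in its argument: a point further to the right falls into a region with a larger or equal digit, the half-open conventions of the partition of $[-1,1]$ being exactly what is needed for this.

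Next I would rule out that the two sequences coincide. If $d_{\alpha_1} = d_{\alpha_2}$, then $D_n(\alpha_1) = D_n(\alpha_2)$ for all $n$, and the displayed difference $(\alpha_2 - \alpha_1) D_n(\alpha_1) \to \infty$ contradicts the fact that both orbits remain in $[-1,1]$. Hence there is a first index $N$ at which $d_{\alpha_1}$ and $d_{\alpha_2}$ differ. The digits agree up to position $N-1$, so the monotonicity step gives $S_{\alpha_1}^{N-1}(1) > S_{\alpha_2}^{N-1}(1)$, and monotonicity of the digit assignment forces $d_{\alpha_1,N} \ge d_{\alpha_2,N}$; as the two digits differ at $N$ this is strict, i.e. $d_{\alpha_2,N} < d_{\alpha_1,N}$. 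This is precisely $d_{\alpha_2} \prec d_{\alpha_1}$, proving the forward implication. The converse is then immediate by trichotomy: $\alpha_1 = \alpha_2$ gives $d_{\alpha_1} = d_{\alpha_2}$, and $\alpha_1 > \alpha_2$ gives $d_{\alpha_1} \prec d_{\alpha_2}$ by the forward direction, each contradicting $d_{\alpha_2} \prec d_{\alpha_1}$.

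The step I expect to require the most care is the interface between the strict spatial inequality and the digits: one must verify that $S_{\alpha_1}^{N-1}(1) > S_{\alpha_2}^{N-1}(1)$ does translate into a weak inequality of digits under the precise half-open partition used in (\ref{q:sdigit}), including the borderline cases where an orbit point lands exactly on $\pm\tfrac12$, and that the argument excluding identical sequences via $D_n(\alpha) \to \infty$ holds uniformly. Everything else is bookkeeping with the formula for $S_\alpha^n(1)$.
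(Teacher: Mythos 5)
Your proof is correct and takes essentially the same route as the paper's: both arguments hinge on the orbit formula $S^n_\alpha(1)=2^n-\alpha D_n(\alpha)$ together with the observation that equal digit prefixes give $S^{N-1}_{\alpha_1}(1)-S^{N-1}_{\alpha_2}(1)=(\alpha_2-\alpha_1)D_{N-1}>0$, from which the first differing digits are compared via the ordered partition in (\ref{q:sdigit}). Your two small variations --- ruling out $d_{\alpha_1}=d_{\alpha_2}$ through the divergence of $D_n(\alpha)$ rather than the paper's identity $1=\alpha\sum_{n\ge1}d_{\alpha,n}2^{-n}$, and deducing the converse by trichotomy instead of the paper's direct comparison at the first differing index --- are both valid and do not change the substance of the argument.
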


\begin{proof}
First assume that $\eta_1<\eta_2$. From the definition of the sequences $d_\eta$ in \eqref{q:d1} it follows that,
\[ \eta_1 \sum_{n \ge 1} \frac{d_{\eta_1,n}}{2^n} =1 = \eta_2 \sum_{n \ge 1} \frac{d_{\eta_2,n}}{2^n}.\]
Hence, $d_{\eta_1} \neq d_{\eta_2}$. Let $n$ be the smallest index such that $d_{\eta_2,n+1} \neq d_{\eta_1,n+1}$. From \eqref{q:setan} it follows that the orbits of $S_\eta$ are determined as follows:
\begin{equation}\label{q:Torbit}
S_{\eta}^n (x) = 2^nx - d_{\eta,1}(x) 2^{n-1}\eta - \cdots - d_{\eta,n-1}(x) 2\eta - d_{\eta,n}(x) \eta.
\end{equation}
Write
\[ x_n = d_{\eta_2,1} 2^{n-1}+\cdots +d_{\eta_2,n-1}2 +d_{\eta_2,n} = d_{\eta_1,1} 2^{n-1}+\cdots +d_{\eta_1,n-1}2 + d_{\eta_1,n}.\]
Then by (\ref{q:Torbit}),
 \[ S^n_{\eta_2}(1) =2^n-x_n\eta_2 < 2^n - x_n \eta_1 = S^n_{\eta_1}(1).\]
This gives $d_{\eta_2,n+1} < d_{\eta_1,n+1}$.

\medskip
\noindent
Now assume $d_{\eta_2} \prec d_{\eta_1}$, and let $n$ be the first index such that $d_{\eta_2, n+1}<d_{\eta_1, n+1}$. Then by (\ref{q:Torbit}),
\[  2^n-\Big(d_{\eta_2,1} 2^{n-1}+\cdots +d_{\eta_2,n}\Big)\eta_2 = S^n_{\eta_2}(1)  < S^n_{\eta_1}(1)  =2^n-\Big(d_{\eta_1,1} 2^{n-1}+\cdots +d_{\eta_1,n}\Big)\eta_1,\]
implying $\eta_2>\eta_1$.
\end{proof}

Define the binary {\em valuation function} $v:\{-1,0,1\}^\mathbb N \to \mathbb R$ by
\begin{equation}\label{q:decdot}
v((b_n)_{n \ge 1}) := \sum_{n \ge 1} \frac{b_n}{2^n}.
\end{equation}
For any word $w \in \{0,1\}^*$ we put $v(w):= v(w0^\infty)$. The proofs of a number of our results are based on the relation the symmetric doubling maps $\{ S_\eta\}_{\eta \in [1,2]}$ have to some other well known interval maps. We gather some necessary basic information on these other systems here.

\vskip .2cm
The {\em doubling map} $D$ is given by
\[ D: [0,1] \to [0,1], \, x \mapsto 2x \pmod 1,\]
see Figure~\ref{f:four}(a). Lebesgue measure is invariant and ergodic for $D$. Furthermore, $D$ can be used to generate {\em binary expansions} of numbers in $[0,1]$ as follows. For each $n \ge 1$, set $b_n(x) = 0$ if $D^{n-1}(x) < \frac12$ and $b_n(x)=1$ if $D^{n-1}(x) \ge \frac12$. Then one can write $D^n(x)=2D^{n-1}(x)-b_n(x)$ and so $x = v((b_n)_{n \ge 1}) = \sum_{n \ge 1} \frac{b_n(x)}{2^n}$. The doubling map preserves the lexicographical ordering on the binary expansions: for $x,y \in [0,1]$ we have
\[ x < y \Leftrightarrow (b_n(x))_{n \ge 1} \prec (b_n(y))_{n \ge 1}.\]
The doubling map is closely related to the {\em tent map}, shown in Figure~\ref{f:four}(b) and given by
\begin{equation}\label{q:T}
T:[0,1] \to [0,1], \, x \mapsto \min\{ 2x,2-2x\}.
\end{equation}
The orbits of the doubling map and of the tent map are related via
\[T \circ D = T \circ T.\]

\vskip .2cm
Besides binary expansions, another famous way to represent real numbers is with a continued fraction. There are many different types of continued fractions. The most common ones are called regular continued fractions and they are generated by iterations of the {\em Gauss map} $G:[0,1] \to [0,1]$ given by $G(0)=0$ and
\[G(x) = \frac1x \pmod 1, \quad \text{if } x \neq 0,\]
see Figure~\ref{f:four}(c). If we set for each $n \ge 1$ and each $x \in [0,1]$ with $G^{n-1}(x) \neq 0$ that $a_n (x) =k$ if $G^{n-1}(x) \in \big( \frac1{k+1}, \frac1k \big]$, then it is well known that if $x \not \in \mathbb Q$, then
\[ x = \cfrac1{a_1(x) + \cfrac1{a_2(x) + \cfrac1{a_3(x) + \ddots}}}\]
and if $x \in \mathbb Q$, then there is an $m \in \mathbb N$, such that
\[ x = \cfrac1{a_1(x) + \cfrac1{a_2(x) + \ddots + \cfrac1{a_m(x)}}}.\]
This expression is called the {\em regular continued fraction expansion} of $x$ and is characterised by the fact that all numerators in the continued fraction are equal to 1. For a sequence of numbers $(a_n)_{n \ge 1} \subseteq \mathbb N^{\mathbb N}$ we use the notation
\[ [0; a_1 a_2 \cdots ] := \cfrac1{a_1+\cfrac1{a_2 + \ddots }}. \]
Note that in case of a finite number of digits $a_1, \ldots, a_m$ with $a_m \ge 2$, we have
\[  [0;a_1  \cdots a_m] = [0; a_1 \cdots  a_{m-1}  (a_m-1)1]. \]
If for an $x \in (0,1]$ there is an infinite sequence of digits $(a_n)_{n \ge 1} \subseteq \mathbb N^{\mathbb N}$, such that $x=[0;a_1a_2\cdots]$, then $(a_n)_{n \ge 1}$ is the digit sequence produced by the Gauss map. On the other hand, if $x$ is represented by two finite sequences
\[ x= [0; a_1 a_2 \cdots a_m ] := [0;a_1a_2 \cdots a_{m-1}(a_m-1)1], \]
then the digit sequence for $x$ given by the Gauss map is $(a_n)_{1 \le n \le m}$. Also note that for each $x \in (0,1]$ we have
\[ G(x) = [0; a_2(x)a_3(x)\cdots], \]
i.e., $(a_n(G(x)))_{n \ge 1} = \sigma (a_n(x))_{n \ge 1}$. Let $<_g$ denote the ordering on $\mathbb N^{\mathbb N}$ induced by the Gauss map on the regular continued fraction expansions, i.e., $(c_n)_{n \ge 1} <_g (d_n)_{n \ge 1}$ if and only if the first index $n$ such that $c_n \neq d_n$ is even and $c_n < d_n$ or it is odd and $c_n > d_n$. Then for $x=[0; c_1  c_2  \cdots ]$ and $y=[0; d_1 d_2 \cdots ]$ we have
\begin{equation}\label{q:gaussord}
x < y \quad  \Leftrightarrow \quad (c_n)_{n \ge 1} <_g (d_n)_{n\ge 1}.
\end{equation}
The above ordering is also applied to finite strings of the same length.

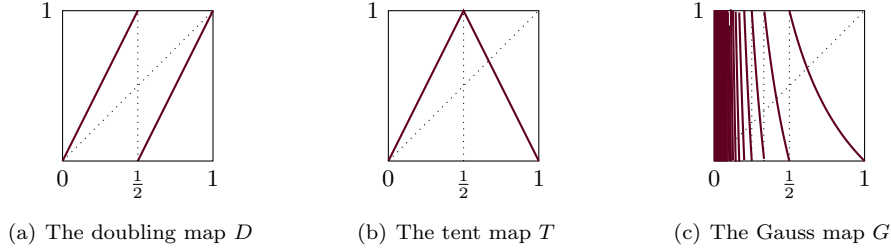
\begin{figure}[h]
\centering
\subfigure[The doubling map $D$]{\begin{tikzpicture}[scale=2]
\draw[white](-.6,0)--(1.5,0);
\draw(0,0)node[below]{\small $0$}--(.5,0)node[below]{\small $\frac12$}--(1,0)node[below]{\small$1$}--(1,1)--(0,1)node[left]{\small $1$}--(0,0);
\draw[thick, purple!50!black](0,0)--(.5,1)(.5,0)--(1,1);
\draw[dotted](0,0)--(1,1)(.5,0)--(.5,1);
\end{tikzpicture}}
\subfigure[The tent map $T$]{\begin{tikzpicture}[scale=2]
\draw[white](-.6,0)--(1.5,0);
\draw(0,0)node[below]{\small $0$}--(.5,0)node[below]{\small $\frac12$}--(1,0)node[below]{\small$1$}--(1,1)--(0,1)node[left]{\small $1$}--(0,0);
\draw[thick, purple!50!black](0,0)--(.5,1)--(1,0);
\draw[dotted](0,0)--(1,1)(.5,0)--(.5,1);
\end{tikzpicture}}
\subfigure[The Gauss map $G$]{\begin{tikzpicture}[scale=2]
\draw[white] (-.6,0)--(1.5,0);
\draw(0,0)node[below]{\small $0$}--(.5,0)node[below]{\small $\frac12$}--(1,0)node[below]{\small$1$}--(1,1)--(0,1)node[left]{\small $1$}--(0,0);
 \draw[dotted] (.5,0)--(.5,1);
  \draw[dotted] (.33,0)--(.33,1);
  \draw[dotted] (.25,0)--(.25,1);
 \draw[dotted] (0,0)--(1,1);
\draw[thick, purple!50!black, smooth, samples =20, domain=.5:1] plot(\x,{1 / \x -1});
\draw[thick, purple!50!black, smooth, samples =20, domain=.334:.5] plot(\x,{1 /\x -2});
\draw[thick, purple!50!black, smooth, samples =20, domain=.25:.332] plot(\x,{1 /\x -3});
\draw[thick, purple!50!black, smooth, samples =20, domain=.2:.25] plot(\x,{1 /\x -4});
\draw[thick, purple!50!black, smooth, samples =20, domain=.167:.2] plot(\x,{1 /\x -5});
\draw[thick, purple!50!black, smooth, samples =20, domain=.143:.167] plot(\x,{1 /\x -6});
\draw[thick, purple!50!black, smooth, samples =20, domain=.125:.143] plot(\x,{1 /\x -7});
\draw[thick, purple!50!black, smooth, samples =20, domain=.111:.125] plot(\x,{1 /\x -8});
\draw[thick, purple!50!black, smooth, samples =20, domain=.101:.1112] plot(\x,{1 /\x -9});
\draw[thick, purple!50!black, smooth, samples =20, domain=.091:.1] plot(\x,{1 /\x -10});
\filldraw[purple!50!black] (0,0) rectangle (.09,1);
\end{tikzpicture}}
\caption{The maps $D$, $T$ and $G$.}
\label{f:four}
\end{figure}

\subsection{The invariant density and the definition of matching}
The frequency of the digit 0 in the signed binary expansions produced by the symmetric doubling map $S_\eta$, $\eta \in [1,2]$, from (\ref{q:seta}) can be obtained from Birkhoff's Ergodic Theorem if we would have an explicit expression for an absolutely continuous invariant measure. For any $\eta \in [1,2]$ the map $S_\eta$ falls into the class of maps studied by Kopf in \cite{Kop90}. The results from \cite{Kop90} imply that for any $\eta \in (1,2]$ the system $S_\eta$ has a unique absolutely continuous invariant measure $\mu_{\eta}$ that is ergodic and has a density function $f_\eta$ that is an infinite sum of indicator functions over intervals with endpoints in the set
\begin{equation}\label{q:partition}
\{ S^n_{\eta} (1-\eta), S_{\eta}^n (1), S^n_{\eta} (\eta-1), S_{\eta}^n (-1) \, : \, n \ge 0 \}.
\end{equation}
More specifically, in Appendix A we show that the probability density $f_\eta$ is given by
\begin{equation}\label{q:density}
f_\eta(x) = \frac1{C} \sum_{n \ge 0} \frac{1}{2^{n+1}} \Big(1_{[-1, S_\eta^n (\eta-1))}(x) - 1_{[-1, S_\eta^n (-1))}(x) + 1_{[-1, S_\eta^n (1))}(x) - 1_{[-1, S_\eta^n (1-\eta))}(x) \Big),
\end{equation}
where $C$ is a normalising constant. From \eqref{q:density} we see that there are two situations in which the infinite sum becomes a finite sum and the density becomes piecewise smooth. Firstly, this happens when the orbits of 1 and $1-\eta$ under $S_\eta$ are finite (and thus by symmetry also the orbits of $-1$ and $\eta-1$ are finite). In that case the set from (\ref{q:partition}) becomes finite and $S_\eta$ has a Markov partition. For a concrete example, consider $\eta=\frac32$ from Figure~\ref{f:variousalpha}(c). A Markov partition is given here by
\[ \Big\{ \Big(-1,-\frac12 \Big), \Big( -\frac12, \frac12 \Big), \Big( \frac12, 1\Big) \Big\}.\]

\vskip .2cm
From the description of $S_\eta^n$ from \eqref{q:Torbit} it is clear that if the orbit of 1 is finite, i.e., if there are $k \neq n$ such that $S^n_\eta(1) = S^k_\eta(1)$, then $\eta \in \mathbb Q$. Hence, for most $\eta$ a Markov partition does not exist. From \eqref{q:density} we see that $f_\eta$ is also piecewise smooth if there is an $m \ge 1$, such that $S_{\eta}^m (1) = S_{\eta}^m (1-\eta)$, i.e., if $S_\eta$ has matching. Recall the definition of matching from the introduction:

\begin{defn}\label{d:matching}
The map $S_\eta$ has {\em matching} if there is an $m \ge 1$, such that $S^m_\eta (1) = S^m_\eta (1-\eta)$. For $S_\eta$ we define the {\em matching index} $m(\eta)$ by
\[ m(\eta):= \inf \{m\ge 1 \, : \, S^m_\eta (1) = S^m_\eta (1-\eta)\}.\]
If $S_\eta$ does not have matching, then $m(\eta)=\infty$. 
\end{defn}

\begin{remark}{\rm
Matching does not exclude a Markov partition and vice versa. For $\eta=\frac65$ for example (see Figure~\ref{f:variousalpha}(e)), we have a Markov partition, but we do not have matching. The orbits of $1$ and $1-\eta$ are given by:
\begin{align*}
&S_\eta(1)= 2-\eta = \frac45, && S^2_\eta(1) = \frac25, & & S^3_\eta(1)=S_\eta(1),\\
&S_\eta(1-\eta) = S_\eta\Big( -\frac15 \Big) = -\frac25, & & S_\eta^2(1-\eta) = -\frac45, && S_\eta^3(1-\eta) =S_\eta(1-\eta). 
\end{align*}
Note that $d_\eta = 1(10)^\infty$. For $\eta=\frac43$ (see Figure~\ref{f:variousalpha}(d)) there is matching and a Markov partition since $S_\eta^2 (1) = 0 = S^2_\eta( 1-\eta)$. For $\eta = \sqrt 3$ (see Figure~\ref{f:variousalpha}(b)) we have matching, but no Markov partition. The orbits of 1 and $1-\eta$ are given by 
\[ S_\eta (1) = 2-\sqrt3 \quad \text{and} \quad S_\eta(1-\sqrt 3) = 2-2\sqrt3 + \sqrt 3  = S_\eta (1).\]
}\end{remark}

\vskip .2cm
There are two cases, namely $\eta=1$ and $\eta \in \big[\frac32, 2\big]$, for which we can immediately determine whether matching occurs and compute the frequency of 0 in the sequence $(d_{\eta,n}(x))_{n \ge 1}$ for typical $x$. If $\eta =1$, then $S^n_{\eta} (1)=1=\eta$ for all $n\ge 0$ and $S^n_{\eta} (1-\eta) = 0$ for all $n \ge 0$. So  $S^n_{\eta} (1) - S^n_{\eta} (1-\eta) = 1 = \eta$ for all $n \ge 0$ and there is no matching. In this case the system splits into two copies of the doubling map, as can be seen from Figure~\ref{f:variousalpha}(f). Normalised Lebesgue measure is invariant and $\mu_1 \big( \big[-\frac12, \frac12 \big] \big)=\frac12$, so typically half of the digits in $(d_{1,n}(x))_{n \ge 1}$ will be a 0. For $\eta = \frac32$ we know from Figure~\ref{f:variousalpha}(c) that $S_\eta$ has a Markov partition. If $\eta \in \big(\frac32, 2\big]$, then $1-\eta < -\frac12$ and $\eta -1 > 2-\eta$. This gives that $S_\eta(1-\eta) = 2(1-\eta)+\eta = 2-\eta = S_\eta (1)$. Hence, we have matching after one step and we have identified our first matching interval. By \eqref{q:density} we get that for $\eta \in \big[\frac32, 2\big]$ the invariant probability density $f_{\eta}$ is given by
\begin{equation}\label{q:h2}
f_{\eta}:[-1,1]\to[-1,1], \, x \mapsto \frac1{2\eta} \Big(1 + 1_{(1-\eta , \eta-1)}(x)\Big).
\end{equation}
So,
\begin{equation}\label{q:>32}
\mu_{\eta}\Big( \Big[-\frac12, \frac12 \Big] \Big) = \frac1{2\eta} \int_{-\frac12}^{\frac12} 1+ 1_{(1-\eta, \eta-1)}(x) \, dx = \frac1{\eta},
\end{equation}
which on the interval $\big[\frac32, 2\big]$ is maximal for $\eta =\frac32$, giving $\mu_{\frac32}\big( \big[-\frac12, \frac12 \big] \big) = \frac23$. Hence, the maximal frequency of the digit 0 in the sequences $(d_{\eta,n}(x))_{n\ge 1}$ for typical $x$ and $\eta \in \big[ \frac32,2\big]$ is obtained for $\eta = \frac32$ and equals $\frac23$.

\subsection{Matching almost everywhere}
From now on we let $\eta \in \big(1, \frac32 \big)$. By Definition~\ref{d:matching} we have matching at time $m$ if $S^m_{\eta}(1) = S^m_{\eta} (1-\eta)$ and $S^n_{\eta} (1) \neq S^n_{\eta} (1-\eta)$ for all $1 \le n \le m-1$. The next result says that under iterations of $S_\eta$ the points $1$ and $1-\eta$ are either a distance $\eta$ apart or mapped to the same point.
\begin{prop}\label{p:0alpha} For any $n \ge 0$, $S^n_{\eta}(1) - S^n_{\eta} (1-\eta) \in \{0, \eta \}$. Moreover, $m(\eta)=m$ if and only if $S^{m-1}_\eta(1)> \frac12$ and $S^{m-1}_\eta(1-\eta)< -\frac12$.
\end{prop}

\begin{proof}
We prove the first statement by induction.\\
- For $n =0$ it is true, since $1- (1-\eta)= \eta$.\\
- Assume now that for some $n \ge 0$, $S^n_{\eta}(1) - S^n_{\eta} (1-\eta) \in \{0, \eta \}$. If $S^n_{\eta}(1) - S^n_{\eta} (1-\eta)=0$, then $S^k_{\eta}(1) - S^k_{\eta} (1-\eta) =0$ for all $k \ge n$, so assume that $S^n_{\eta}(1) - S^n_{\eta} (1-\eta) = \eta$. From (\ref{q:Torbit}) it follows that there are $b,c \in \mathbb Z$ such that
\[ S^n_{\eta}(1) = 2^n - b\eta \quad \text{ and } \quad S^n_{\eta}(1-\eta) = 2^n - c\eta.\]
This implies that $-b\eta + c\eta = \eta$, so $c=1+b$. Since $\eta > 1$, $S^n_{\eta}(1) > 0$ and $S^n_{\eta}(1-\eta) <0$. Moreover, $S^n_{\eta}(1)$ and $S^n_{\eta}(1-\eta)$ cannot both lie in the interval $\big[ -\frac12, \frac12 \big]$. We distinguish three cases.

\vskip .1cm
\underline{Case 1:} Assume $0 < S^n_{\eta} (1) < \frac12$, so $S^n_{\eta} (1-\eta) < -\frac12$. Then $S^{n +1}_{\eta}(1) = 2^{n +1} -2b\eta$ and $S^{n+1}_{\eta} (1-\eta) = 2^{n+1}-2c\eta + \eta = 2^{n+1} - 2(b+1)\eta + \eta = 2^{n +1}-2b\eta -\eta$. Hence, $S^{n+1}_{\eta}( 1) - S^{n+1}_{\eta} (1-\eta) = \eta$.

\vskip .1cm
\underline{Case 2:} Assume $S^n_{\eta}(1) > \frac12$ and $-\frac12 < S^n_{\eta} (1-\eta) < 0$. Then $S^{n +1}_{\eta}(1) = 2^{n+1}-2b\eta -\eta$ and $S^{n+1}_{\eta} (1-\eta) = 2^{n+1}-2c\eta = 2^{n+1} - 2(b+1)\eta = 2^{n+1}-2b\eta -2\eta$. So, $S^{n+1}_{\eta} (1) - S^{n+1}_{\eta} (1-\eta) = \eta$.

\vskip .1cm
\underline{Case 3:} Assume $S^n_{\eta}(1) > \frac12$ and $ S^n_{\eta} (1-\eta) <  -\frac12$. Then $S^{n +1}_{\eta}(1) = 2^{n +1}-2b\eta -\eta$ and $S^{n+1}_{\eta} (1-\eta) = 2^{n+1}-2c\eta + \eta= 2^{n+1} - 2(b+1)\eta +\eta= 2^{n +1}-2b\eta -\eta$. So, $S^{n+1}_{\eta} (1) - S^{n+1}_{\eta} (1-\eta) = 0$ and matching occurs at step $n +1$.

\vskip .1cm
In case $S^n_{\eta} (1)= \frac12$, the map $S_\eta$ has a Markov partition, since $S^n_{\eta}(1-\eta) = \frac12 -\eta$ and $\eta >1$ imply that $S^{n+1}_{\eta} (1-\eta) = 1-2\eta + \eta =1-\eta$. A similar situation occurs when $S^n_{\eta} (1-\eta)=-\frac12$ and $S^n_{\eta} (1) = \eta -\frac12 > \frac12$. Then $S^{n +1}_{\eta}(1) = 2\eta -1-\eta = \eta -1$. By (\ref{q:seta}) $S_\eta$ does not have matching in these cases and $S^n_{\eta}(1) - S^n_{\eta} (1-\eta) = \eta$ for all $n \ge 0$. Hence we see that the first statement holds for all $n \in \mathbb N$. For the second statement note that the only way in which the difference between $S_\eta^n(1)$ and $S_\eta^n(1-\eta)$ becomes 0 is precisely in case 3 from the induction proof.
\end{proof}

\begin{remark}{\rm Consider again the situation that $S^n_{\eta} (1)= \frac12$ or $S^n_{\eta} (1-\eta)=-\frac12$ for some $n$. Whether or not matching occurs depends on the choice made for the action of $S_\eta$ on the critical points. Our choice for the definition from (\ref{q:seta}) implies that $S_\eta$ does not have matching in these cases.
}\end{remark}

From this proposition we obtain an alternative characterisation of the matching index $m(\eta)$ from Definition~\ref{d:matching}:
\[ m(\eta) = \inf \Big\{ n \ge 1 \, : \, \frac12 < S^n_{\eta} (1) < \eta- \frac12 \Big\} +1.\]

\vskip .2cm
We now establish the relation between the map $D$ and the maps $S_\eta$, that will be of help later. Recall the definition of $d_\eta = (d_{\eta,n})_{n \ge 1}= (d_{\eta,n}(1))_{n \ge 1}$ from \eqref{q:d1}.

\begin{prop}\label{relationbinary}
Let $1 < \eta < \frac32$ and let $m$ be the first index such that either $S^m_\eta(1) \in \big( \frac12, \eta - \frac12 \big)$ or $D^m\big( \frac1{\eta}\big) \in \big( \frac1{2\eta}, 1-\frac1{2\eta} \big)$. Then $\frac1{\eta} S^n_\eta(1) = D^n\big( \frac1{\eta} \big)$ and thus $d_{\eta,n} = b_n \big( \frac1{\eta} \big)$ for all $0 \le n \le m$. Moreover, both $S^m_\eta(1) \in \big( \frac12, \eta - \frac12 \big)$ and $D^m\big( \frac1{\eta}\big) \in \big( \frac1{2\eta}, 1-\frac1{2\eta} \big)$.
\end{prop}

\begin{proof}
Note that the last statement immediately follows from the fact that $\frac1{\eta} S^m_\eta(1) = D^m\big( \frac1{\eta} \big)$. We prove the first statement by induction.\\
- For $n=0$ the statement holds, since $\eta D^0\big( \frac1{\eta} \big) = 1 = S_\eta^0(1)$ and so $b_1\big( \frac1{\eta} \big)=1=d_{\eta,1}$.\\
- Suppose that for some $n < m$ we have $\frac1{\eta} S^j_\eta(1) = D^j \big( \frac1{\eta} \big)$ and $d_{\eta,j} = b_j \big( \frac1{\eta} \big)$ for all $j \le n$. Similar to (\ref{q:Torbit}) it holds that
\[ D^n \Big( \frac1{\eta} \Big) = \frac{2^n}{\eta} - b_1\Big( \frac1{\eta} \Big) 2^{n-1} - \cdots -b_n\Big( \frac1{\eta} \Big) = \frac{2^n}{\eta} - d_{\eta,1} 2^{n-1} - \cdots -d_{\eta, n}.\]
So, $\frac1{\eta} S^n_{\eta}(1) = D^n \big(\frac1{\eta} \big)$. We have $d_{\eta,n+1}=1$ if and only if $S^n_{\eta}(1) \in \big( \eta - \frac12, 1 \big]$, which by the previous holds if and only if $D^n\big( \frac1{\eta} \big) \in \big( 1-\frac1{2\eta}, \frac1{\eta} \big] \subseteq \big( \frac12,1\big)$. Hence, $d_{\eta,n+1}=1$ if and only if $b_{n+1} \big( \frac1{\eta} \big) =1$. On the other hand, $d_{\eta,n+1}=0$ if and only if $S^n_{\eta}(1) \in \big( \eta -1, \frac12 \big]$, which holds if and only if $D^n\big( \frac1{\eta} \big) \in \big( 1-\frac1{\eta}, \frac1{2\eta} \big] \subseteq \big( 0, \frac12\big)$. Hence $d_{\eta,n+1}=0$ if and only if $b_{n+1} \big( \frac1{\eta} \big) =0$. This implies that
\[ S^{n+1}_\eta (1) = 2 S^n_\eta(1) -d_{\eta, n+1}\eta = \eta \Big(2D^n\Big( \frac1{\eta} \Big) - b_{n+1} \Big( \frac1{\eta} \Big) \Big) = \eta D^{n+1}\Big( \frac1{\eta} \Big),\]
which proves the proposition.
\end{proof}

Figure~\ref{f:holes} shows the regions where matching occurs for $S_\eta$ and $D$. The previous proposition immediately implies a third characterisation of the matching index:
\[ m(\eta) = \inf\Big\{ n\ge 1\, : \, D^n\Big(\frac1{\eta}\Big) \in \Big( \frac1{2\eta}, 1-\frac1{2\eta} \Big)\Big\} +1.\]
We use this characterisation in the next proposition.
\begin{figure}[h]
\centering
\subfigure[$S_\eta$ with the hole $\big( \frac12, \eta-\frac12 \big)$]{\begin{tikzpicture}[scale=1]
\draw[white](-3,0)--(3,0);
\filldraw[yellow!35] (.5,-1) rectangle (.71,1);
\draw(-1,-1)node[left]{\small -$1$}--(-.5,-1)node[below]{\small -$\frac12$}--(0,-1)node[below]{\small$0$}--(.5,-1)node[below]{\small$\frac12$}--(1,-1)node[below]{\color{red}\small 1}--(1,1)--(-1,1)node[left]{\small $1$}--(-1,0)node[left]{\small $0$}--(-1,-1);
\draw[thick, purple!50!black](-1,-.79)--(-.5,.21) (-.5,-1)--(.5,1)(.5,-.21)--(1,.79);
\draw[dotted](-1,-1)--(1,1)(-.5,-1)--(-.5,1)(.5,-1)--(.5,1)(.71,-1)--(.71,1);
\draw(-1,0)--(1,0)(0,-1)--(0,1);
\filldraw[red] (1,.79) circle (1.6pt);
\end{tikzpicture}}
\hspace{.5cm}
\subfigure[$D$ with the hole $\big(\frac1{2\eta}, 1-\frac1{2\eta}\big)$]{\begin{tikzpicture}[scale=2]
\draw[white](-1,0)--(2,0);
\filldraw[yellow!35] (.413,0) rectangle (.587,1);
\draw(0,0)node[below]{\small $0$}--(.413,0)node[below]{\small $\frac1{2\eta}$}--(.826,0)node[below]{\color{red}\small $\frac1{\eta}$}--(1,0)node[below]{\small$1$}--(1,1)--(0,1)node[left]{\small $1$}--(0,0);
\draw[thick, purple!50!black](0,0)--(.5,1)(.5,0)--(1,1);
\draw[dotted](0,0)--(1,1)(.5,0)--(.5,1)(.413,0)--(.413,1)(.587,0)--(.587,1);
\draw[dotted, red] (.826,0)--(.826,.652);
\filldraw[red] (.826, .652) circle (.8pt);
\end{tikzpicture}}
\caption{There is matching for $S_\eta$ if the orbit of 1 enters the yellow region under $S_\eta$ in (a) or equivalently if the orbit of $\frac1\eta$ enters the yellow region under $D$ in (b).}
\label{f:holes}
\end{figure}
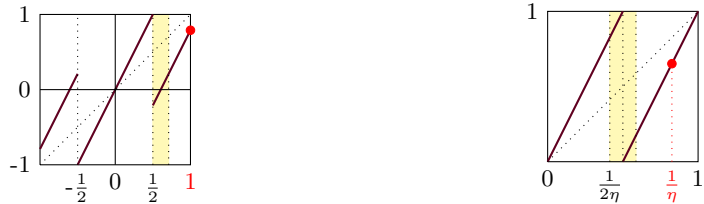

\begin{prop}\label{t:measure0}
For Lebesgue almost all $\eta \in [ 1 , 2]$ it holds that $m(\eta) < \infty$, i.e., $S_\eta$ has matching.
\end{prop}

\begin{proof}
It is enough to consider $\eta \in \big(1, \frac32 \big)$. Let $k \ge 7$. The ergodicity of $D$ with respect to Lebesgue measure gives that for Lebesgue almost every $x \in (0,1)$ there is an $n \ge 1$, such that $D^n x \in \big( \frac12-\frac1k, \frac12 + \frac1k \big)$. Since $\eta > \frac{k}{k-2}$ if and only if $\frac12 + \frac1k < 1-\frac1{2\eta}$, this means that for almost all $\eta \in \big( \frac{k}{k-2}, \frac32 \big)$ matching occurs for $S_{\eta}$. Let $A_k$ denote the set of all $\eta \in \big( \frac{k}{k-2}, \frac32 \big)$ such that $S_{\eta}$ does not have matching. Then $A_k$ has zero Lebesgue measure and thus also $\bigcup_{k \ge 7} A_k$ has zero Lebesgue measure. Since $\bigcup_{k \ge 7} A_k$ equals the set of all $\eta \in \big(1 , \frac32 \big)$ such that $S_{\eta}$ does not have matching, this finishes the proof.
\end{proof}

Now consider the {\em non-matching set} $\mathcal N$ of parameters $\eta$ such that $S_\eta$ does not have matching:
\begin{equation}
\label{q:nonmatching}
\mathcal N := \Big\{ \eta \in \Big(1, \frac32 \Big) \, : \, m(\eta) = \infty \Big\} =  \Big\{  \eta \in \Big(1, \frac32 \Big) \, :  \, D^n \Big( \frac1{\eta} \Big) \not \in \Big( \frac1{2\eta}, 1-\frac1{2\eta} \Big), \mbox{ for all } n\ge 1  \Big\}. 
\end{equation}
One easily checks that $\mathcal N = \frac1{\Gamma}$, where
\begin{equation}\label{q:gamma}
\Gamma =\{x\in [0,1] \, : \, 1-x \le D^k(x) \le x, \mbox{ for all } k\ge 1\},
\end{equation}
by noting that if $x\in \Gamma$, then $x>2/3$, and $D^k(x)\notin \big(\frac{x}{2}, 1-\frac{x}{2} \big)$ for all $k$. The set $\Gamma$ was introduced in \cite{AC83,AC01} by Allouche and Cosnard in connection with univoque numbers. Recall the definition of the tent map from \eqref{q:T}. In \cite[Lemma 5.5]{BCIT13} Bonanno et al.~proved that $\Gamma = \Lambda \backslash \{0\}$, where 
\begin{equation}\label{q:lambda}
\Lambda=\{x\in [0,1]\, : \, T^k(x)\le x, \mbox{ for all } k\ge 1\},
\end{equation}
which under an appropriate coding can be seen as a representation of all the kneading invariants of unimodal maps. They showed, among other things, that the derived set $\Lambda'$, i.e., the set $\Lambda$ minus its isolated points, is a Cantor set. In particular this yields that $\Lambda$ is closed, uncountable, totally disconnected and has Lebesgue measure 0. In \cite{CT12} it was shown that $\Lambda$ has Hausdorff dimension 1. Since $\mathcal N$ is homeomorphic to $\Lambda \backslash \{ 0 \}$ via the bi-Lipschitz homeomorphism $x \mapsto \frac1x$ on $[1,2]$, we can use these correspondences to conclude that $\mathcal N$ has the following properties:
\begin{itemize}
\item[(i)] the set $\mathcal N$ has Lebesgue measure 0,
\item[(ii)] the set ${\mathcal N}$ has Hausdorff dimension 1,
\item[(iii)] the set ${\mathcal N}$ is totally disconnected and closed.
\end{itemize}
Note that these properties also imply that matching occurs Lebesgue almost everywhere.

\section{Matching intervals for the symmetric doubling maps}
Now that we have established that matching holds on a full measure set, we will study the finer matching structure of the maps $S_\eta$. In particular, we want to identify intervals in $[1,2]$ of parameters such the maps $S_\eta$ have the same matching index for all $\eta$ in such an interval. This is achieved by establishing a relation between the symmetric doubling maps and another one-parameter family of maps, Nakada's $\alpha$-continued fraction maps. In the next section we summarise the information on $\alpha$-continued fraction maps that we need here.

\subsection{Matching for the $\alpha$-continued fraction maps}\label{ss:alpha}
The $\alpha$-continued fraction maps can be seen as generalisations of the Gauss map. They were first introduced by Nakada in \cite{Nak81} and form a one-parameter family of maps $\{T_\alpha: [\alpha-1,\alpha] \to [\alpha-1, \alpha] \}_{\alpha \in [0,1]}$ defined by $T_\alpha(0)=0$ and
\[ T_\alpha (x) = \left| \frac1x \right| - \left\lfloor \left| \frac1x \right| + 1-\alpha \right\rfloor, \quad \text{if }x \neq 0.\]
In \cite{Nak81} Nakada studied the maps for $\alpha \in \big[\frac12,1 \big]$. He proved that each $T_\alpha$ has a unique ergodic absolutely continuous invariant measure $\nu_\alpha$ and he studied the dependence on $\alpha$ of the metric entropy $h_{\nu_\alpha}(T_\alpha)$. It was later found in various papers that the map $\alpha \mapsto h_{\nu_\alpha}(T_\alpha)$ has a very intricate structure. Here we mention specifically some relevant results from \cite{CT12,KSS12,BCIT13,CT13}.

\vskip .2cm
Let $a \in \mathbb Q \cap (0,1)$ with regular continued fraction expansion 
\[ a = [0; a_1a_2\cdots a_k] = [0;a_1 \cdots a_{k-1}(a_k-1)1].\]
From now on, we will always choose to write the expansion of $a$ that has an odd number of digits, so that $a = [0; a_1a_2\cdots a_{2n+1}]$. Associate to $a$ the interval $I_a \subseteq [0,1]$, given by
\[ I_a = (a^-, a^+) = ([0; (a_1 \cdots a_{2n+1})^{\infty}],[0; (a_1 \cdots a_{2n}(a_{2n+1}-1)1)^{\infty}])\]
if $a_{2n+1} \ge 2$ and
\[ I_a = (a^-, a^+) = ( [0; (a_1 \cdots a_{2n+1})^{\infty}], [0; (a_1 \cdots a_{2n-1}(a_{2n}+1))^{\infty}])\]
if $a_{2n+1}=1$. This is well defined by \eqref{q:gaussord}. The interval $I_a = (a^-,a^+)$ is called the {\em quadratic interval} with {\em pseudocenter} $a$ in \cite{CT12}. For $a=1$ this procedure does not work and we take $1^- = [0;1^\infty]$ and $1^+ = [0;1]$, so that $I_1 = (g,1)$, where $g = \frac{\sqrt 5 -1}{2}$ is the golden mean.

\vskip .2cm
In \cite[Proposition 2.4 and Lemma 2.6]{CT12} it was shown that any two quadratic intervals are either disjoint or one is a proper subset of the other. A quadratic interval is then called {\em maximal} if it is not properly contained in any other quadratic interval. The maximal quadratic intervals are the matching intervals for the family of maps $\{T_\alpha\}_{\alpha \in [0,1]}$, which in particular implies that the entropy $h_{v_\alpha}(T_\alpha)$ is monotone on $I_a$. More precisely, if $a = [0;a_1 \cdots a_{2n+1}]$ and if we set $N=1+\displaystyle\sum_{j \text{ even}}a_j $ and $M=-1 +\displaystyle\sum_{j \text{ odd}}a_j$, then in \cite[Theorem 3.8]{CT12} it is shown that on $I_a$,
\begin{equation}\label{q:hmonotone} h_{\nu_\alpha}(T_{\alpha}) \text{ is } \begin{cases}
\text{increasing } & \text{if } N-M<0,\\
\text{constant } & \text{if } N-M=0,\\
\text{decreasing } & \text{if } N-M>0.
\end{cases}\end{equation}
In \cite[Section 10]{KSS12} the authors showed that $\alpha \mapsto h_{\nu_\alpha}(T_\alpha)$ is constant on $[g, g^2]$ and their first open question asks whether this is a maximal value that is only attained on this interval.

\vskip .2cm
The results from \cite[Proposition 2.13 and Lemma 4.4]{CT12} characterise the maximal quadratic intervals. Since $a=[0;a_1\cdots a_{2n+1}]$ is assumed to have an odd length, their characterisation reduces to the following: $I_a$ is maximal if and only if 
\begin{equation}\label{q:max1}
a_1 \cdots a_{2n+1} <_g a_{i+1} \cdots a_{2n+1}a_1\cdots a_i \quad \text{for all } i=1,\ldots ,2n.
\end{equation}

\vskip .2cm
Recall the definition of $v$ from \eqref{q:decdot} and consider the map $\varphi:[0,1] \to \big[\frac12, 1\big]$ defined as follows. If $x \in [0,1]$ has regular continued fraction expansion $x = [0;a_1a_2a_3 \cdots]$, then
\[ \varphi(x) = v ( \underbrace{11\cdots 1}_{a_1} \underbrace{00\cdots 0}_{a_2} \underbrace{11\cdots 1}_{a_3} \cdots).\]
In \cite[Theorem 1.1]{BCIT13} the authors proved that $\varphi$ is an orientation reversing homeomorphism. Moreover, for the {\em bifurcation set}
\begin{equation}\label{q:E}
\mathcal E := (0,1) \setminus \bigcup_{\stackrel{a \in \mathbb Q \cap (0,1):}{I_a \text{ maximal}}} I_a
\end{equation}
of parameters $\alpha$ that are not contained in any maximal quadratic interval it was proven in \cite{BCIT13} that $\varphi(\mathcal E) = \Lambda$, where $\Lambda$ is the set from \eqref{q:lambda}.

\subsection{Identifying matching intervals and primitive words}
In this section we identify all matching intervals for the symmetric doubling maps by linking them to the maximal quadratic intervals of $\alpha$-continued fraction maps. Note that for any $a = [0; a_1 \cdots a_{2n+1}] \in \mathbb Q \cap (0,1)$ we have
\begin{equation}\label{q:rc}
\begin{split}
\varphi(a^-) =\ & v((1^{a_1}0^{a_2}\cdots 1^{a_{2n+1}}0^{a_1}1^{a_2}\cdots 0^{a_{2n+1}})^\infty),\\
\varphi(a^+) =\ & \begin{cases}
v((1^{a_1}0^{a_2}\cdots 1^{a_{2n+1}-1}0)^\infty), & \text{if } a_{2n+1} \ge 2,\\
v((1^{a_1}0^{a_2}\cdots 0^{a_{2n}+1})^\infty), & \text{if } a_{2n+1}=1.
\end{cases}\end{split}
\end{equation}
To $a$ we assign the word $w(a) \in \{0,1\}^*$ given by
\begin{equation}\label{q:wa}
w(a) = 1^{a_1}0^{a_2} \cdots 1^{a_{2n+1}}.
\end{equation}

\begin{prop}\label{p:primtomax}
For any $a = [0; a_1 \cdots a_{2n+1}] \in \mathbb Q \cap (0,1)$ the interval $\varphi(I_a)$ is given by
\[\varphi(I_a) = \big( \varphi(a^+), \varphi(a^-) \big) = \Big( \frac{2^m\varphi(a) -1}{2^m -1}, \frac{2^m \varphi(a)+1}{2^m +1} \Big) ,\]
where $m = \sum_{j=1}^{2n+1}a_j$ is the sum of the regular continued fraction digits of $a$.
\end{prop}

\begin{proof}
Write $w(a) =w_1 \cdots w_m$. Since $a^- = [0; (a_1 \cdots a_{2n+1})^{\infty}]$, we have
\begin{eqnarray*}
\varphi(a^-) &=& v \big( (1^{a_1}0^{a_2} \cdots 1^{a_{2n+1}}0^{a_1}1^{a_2} \cdots 1^{a_{2n+1}})^{\infty} \big)\\
&=& v \big( \big( w_1 \cdots w_m (1-w_1)\cdots (1-w_m)\big)^{\infty} \big).
\end{eqnarray*}
From the binary expansion of $\varphi(a^-)$, we see that $\varphi(a^-)$ is the fixed point of the map $D^{2m}$ corresponding to the digits $w_1 \cdots w_m (1-w_1) \cdots (1-w_m)$, so that $\varphi(a^-)$ is the unique solution to the equation
\begin{eqnarray*}
x= D^{2m}(x) &=& 2^{2m} x - w_1 2^{2m-1} - \cdots - 2^m w_m - 2^{m-1}(1-w_1) - \cdots -(1-w_m)\\
& = & 2^{2m} x - 2^m \varphi(a)(2^m-1) - (2^m-1).
\end{eqnarray*}
Hence,
\[ \varphi(a^-) = \frac{2^m \varphi(a) +1}{2^m+1}.\]
A similar calculation shows that
\[ \varphi(a^+) = v \big( \big( w_1 \cdots w_{m-1}0)^\infty \big) = \frac{2^m \varphi(a)-1}{2^m-1},\] since it is the fixed point of the map $D^m$ for the branch corresponding to the digits $w_1 \cdots w_{m-1}0$. The statement then follows from the fact that $\varphi$ is an orientation reversing homeomorphism.
\end{proof}

Recall that $I_1 =(g,1)$, where $g$ is the golden mean and note that $\varphi(1) = \frac12$ and
\[ \varphi(g) =  \sum_{k\ge 1} \frac1{2^{2k-1}} = \frac23.\]
Hence, $\frac1{\varphi(I_1)} = \big(\frac32,2\big)$. We know that for each $\eta \in \big(\frac32,2\big)$ the map $S_\eta$ has matching after one step; the interval $ \frac1{\varphi(I_1)} = \big(\frac32,2\big)$ is a matching interval. The next theorem says that in fact each maximal quadratic interval is mapped to a matching interval for the family of symmetric doubling maps by the map $x \mapsto \frac1{\varphi(x)}$.
\begin{theorem}\label{t:maxmatch}
Let $a=[0; a_1 \cdots a_{2n+1}] \in \mathbb Q \cap (0,1)$ and let $m = \sum_{j=1}^{2n+1} a_j$. Assume that $I_a$ is a maximal quadratic interval. Then any $\eta \in \frac1{\varphi(I_a)}$ has 
\[ d_{\eta,1}(1)\cdots d_{\eta,m-1}(1)=b_1\Big(\frac1{\eta}\Big) \cdots b_{m-1} \Big( \frac1{\eta} \Big) = w_1(a) \cdots w_{m-1}(a)
\]
and $m(\eta)=m$.
\end{theorem}

\begin{proof}
Assume that $a_{2n+1}=1$. The proof for the other case is similar. Write $w(a) =w_1 \cdots w_m$. From Proposition~\ref{p:primtomax} we know that
\[ \varphi(a^-) = v((w_1 \cdots w_m(1-w_1)\cdots (1-w_m))^\infty) \quad \text{and} \quad \varphi(a^+)= v((w_1\cdots w_{m-1}0)^\infty) . \]
Since both $\varphi(a^-)$ and $\varphi(a^+)$ have a purely periodic binary expansion, their binary expansion is unique. Recall the definitions of $\mathcal N$ and $\Lambda$ from (\ref{q:nonmatching}) and (\ref{q:lambda}) and the fact that $\mathcal N = \frac1{\Lambda \backslash \{ 0 \}}$. By \eqref{q:E} and the fact that $\varphi(\mathcal E) = \Lambda$ we have $\frac1{\varphi(a^-)}, \frac1{\varphi(a^+)} \in \mathcal N$ so that by Proposition~\ref{relationbinary} $D^k (\varphi(a^-)) \not \in \big( \frac12 \varphi (a^-), 1-\frac12 \varphi(a^-) \big)$ for any $k \ge 0$ and similarly, $D^k (\varphi(a^+)) \not \in \big( \frac12 \varphi (a^+), 1-\frac12 \varphi(a^+) \big)$ for any $k \ge 0$. Note that
\[ \Big(  \frac12 \varphi (a^+), 1-\frac12 \varphi(a^+) \Big) \subseteq \Big(\frac12 \varphi (a^-), 1-\frac12 \varphi(a^-) \Big).\]
Since the first $m-1$ binary digits of $\varphi(a^-)$ and $\varphi(a^+)$ coincide, i.e., since
\[ b_1(\varphi(a^-))\cdots b_{m-1}(\varphi(a^-)) = b_1(\varphi(a^+))\cdots b_{m-1}(\varphi(a^+)),\]
we have for any $0 \le k \le m-2$ that the points $D^k (\varphi(a^-))$ and $D^k(\varphi(a^+))$ either both lie to the left of $\frac12$ or both lie to right of $\frac12$. Hence, for these values of $k$ the set $D^k (\varphi(I_a))$ is an interval and
\begin{equation}\label{q:lambdadk}
\lambda \big( D^k(\varphi(I_a)) \big) = \lambda \big( \big(D^k (\varphi(a^-)), D^k (\varphi(a^+)) \big) \big) = 2^k \lambda (\varphi(I_a)),
\end{equation}
where $\lambda$ denotes the one-dimensional Lebesgue measure. For any $x \in \varphi(I_a)$ and $0 \le k \le m-2$ we have that $D^k (x) \in D^k (\varphi(I_a))$. If it holds that $D^k(\varphi(a^-)) \ge 1-\frac12 \varphi(a^-)$, then it follows from \eqref{q:lambdadk} $D^k(x)\ge 1-\frac12 x$. If on the other hand $D^k (\varphi(a^-))< \frac12 \varphi(a^-)$, then combining that $D^k(\varphi(a^+))< \frac12 \varphi(a^+)$ and \eqref{q:lambdadk}, we can also deduce that $D^k(x) \le \frac12 x$. So $D^k (x) \not \in \big( \frac12 x, 1-\frac12 x \big)$ for any $0 \le k \le m-2$.

\vskip .2cm
Next we consider the $(m-1)$-st iteration. By the periodicity and the form of the binary expansions of $\varphi(a^-)$ and $\varphi(a^+)$ we have that
\[ \varphi(a^+) = D^m (\varphi(a^+)) = 2D^{m-1}(\varphi(a^+)),\]
so $D^{m-1}(\varphi(a^+))= \frac12 \varphi(a^+)$ and similarly,
\[ 1-\varphi(a^-) = D^m (\varphi(a^-)) = 2D^{m-1}(\varphi(a^-))-1, \]
so $D^{m-1} (\varphi(a^-)) = 1- \frac12 \varphi(a^-)$. Hence, $D^{m-1}(\varphi(I_a)) = \big( \frac12 \varphi(a^+), 1-\frac12 \varphi(a^-) \big)$. Since $1-\frac12x > 1-\frac12 \varphi(a^-)$ we obviously have $D^{m-1}(x) < 1-\frac12 x$. The fact that $D^{m-1}(x) > \frac12x$ follows since $\lambda (D^{m-1}(\varphi(I_a)))=2^{m-1} \lambda (\varphi(I_a))$.
\end{proof}

From the previous theorem we get a complete description of the matching behaviour of $\{ S_\eta \}_{\eta \in [1,2]}$ in the sense of Theorem~\ref{t:mintervals}.
\begin{proof}[Proof of Theorem~\ref{t:mintervals}]
First note that if $\eta$ is not in the image of any maximal quadratic interval, then $\varphi^{-1}\big( \frac1{\eta} \big) \in \mathcal E$, where $\mathcal E$ is the bifurcation set from \eqref{q:E}. Since $\varphi(\mathcal E)= \Lambda$ and $\mathcal N = \frac1{\Lambda \setminus \{0\}}$, this means that $\eta \in \mathcal N$ and $S_\eta$ does not have matching.

\vskip .1cm
From Theorem~\ref{t:maxmatch} we know that if $\eta \in \frac1{\varphi(I_a)}$ for some maximal quadratic interval $I_a$, then $S_\eta$ has matching and the matching index $m$ is given by the sum of the regular continued fraction digits of $a$. Moreover, the first part of the signed binary expansion of 1 is equal to $w_1(a) \cdots w_m(a)$ for any $\eta \in \frac1{\varphi(I_a)}$. If, on the other hand, $\eta \not \in \frac1{\varphi(I_a)} \cup \mathcal N$, then there is another $a' \in \mathbb Q \cap (0,1]$ with $I_{a'}$ maximal and $\eta \in \frac1{\varphi(I_{a'})}$. This automatically implies that either $m(\eta) \neq m$ or
\[ d_{\eta,1}(1) \cdots d_{\eta,m}(1) \neq w_1(a) \cdots w_m(a). \qedhere\]
 \end{proof}
In other words, Theorem~\ref{t:maxmatch} says that the matching intervals for the family $\{ S_\eta \}_{\eta \in (1,2]}$ are precisely the images of the matching intervals for the family $\{ T_\alpha \}_{\alpha \in (0,1]}$ under the map $x \mapsto \frac1{\varphi(x)}$.

\begin{remark}{\rm
The map $x \mapsto \frac1{\varphi(x)}$ does \underline{not} give an isomorphism between the transformations $T_\alpha$ and $S_{\frac1{\varphi(\alpha)}}$, since that would imply that these maps have the same measure theoretical entropy and this is in general not the case. The measure theoretical entropy of $S_\eta$ is $\log 2$ for any parameter $\eta$, while there are several articles devoted to a detailed description of the dependence of the measure theoretic entropy of $T_\alpha$ on the parameter $\alpha$, such as \cite{NN08,CT12,KSS12,BCIT13,CT13}.
}\end{remark}

\vskip .2cm
From Theorem~\ref{t:maxmatch} we see that the matching intervals for $\{ S_\eta \}_{\eta \in (1,2]}$ are indexed by the initial part of the digit sequences $d_\eta$ up to the matching index $m(\eta)$ for the values of $\eta$ that are in the matching interval. We now characterise the initial words in $\{0,1\}^*$ that correspond to the matching intervals. Define the function $\psi : \{0,1\}^*\setminus \{\epsilon, 0, 1\} \to  \{0,1\}^*$ by
\begin{equation}\label{q:psi}
\psi (w) = \psi(w_1 \cdots w_m) = w_1 \cdots w_m(1-w_1)(1-w_2)\cdots(1-w_{m-1})1.
\end{equation}
Let $\sigma$ denote the left shift on sequences as before.
\begin{defn}\label{d:primitive}
A word $w= w_1 \cdots w_m \in \{0,1\}^*$ is called {\em primitive} if all the following hold:
\begin{itemize}
\item[(i)] $w_1=w_2 =w_m=1$;
\item[(ii)] $\sigma^n((w_1 \cdots w_m)^\infty) \preceq (w_1 \cdots w_m)^\infty$ for any $n \ge 0$;
\item[(iii)] there is no word $u \in \{0,1\}^*$ such that $u \prec w  \prec \psi(u)$.
\end{itemize}
\end{defn}
The conditions in the definition follow quite naturally from the dynamics of $S_\eta$. $w_1=w_2=1$ is necessary since $\eta \in \big(1, \frac32 \big)$ and $w_m=1$ since the last digit before matching is a 1. Condition (ii) is the usual restriction given by the dynamics of the system. In fact, any digit sequence $(d_{\eta,n}(x))_{n \ge 1}$ produced by the map $S_\eta$ will satisfy the following lexicographical condition: for any $k \ge 0$,
\[ \sigma^k (d_{\eta,n}(x))_{n \ge 1} \preceq (d_{\eta,n}(1))_{n \ge 1}.\]
Condition (iii) is the actual condition specifying which words correspond to a matching interval. It guarantees that matching occurs exactly at time $m$ and not before.

\vskip .2cm
The notion of primitivity is related, in fact equivalent, to the notion of admissibility which is central in the study of unique expansions (see Remark~\ref{r:othersets}).

\begin{defn}\label{d:admissible}
A word $w= w_1 \cdots w_m \in \{0,1\}^*$ is called {\em admissible} 
 if $m\ge 2$ and  
\[
(1-w_1)\cdots (1-w_{m-k})0^{\infty}\prec w_{k+1}\cdots w_m0^{\infty}\preceq w_1\cdots w_{m-k}0^{\infty}\quad\textrm{for all}\quad 0\le k\le m-1.
\]
\end{defn}

In \cite{K18}, Kong proved the equivalence of the above two notions. We state his result in the following proposition, and we give his proof in Appendix B.

\begin{prop}\label{equivalenceDerong}  A word $w\in \{0,1\}^*$ is primitive if and only if it is admissible.
\end{prop} 

Recall the definition of $w(a)$ from \eqref{q:wa} and associate similarly to each primitive $w \in \{0,1\}^*$ a rational number $a \in  \mathbb Q \cap (0,1)$ by setting
\[ a(w) = [0;a_1 \cdots a_{2n+1}] \quad \text{if  } w = 1^{a_1} 0^{a_2} \cdots 1^{a_{2n+1}}.\]
Using the fact that primitivity is equivalent to admissibility, we get the following result linking maximal quadratic intervals and primitive words.
\begin{prop}\label{maximaladmissible}
Let $w \in \{0,1\}^*$ and let $a \in \mathbb Q \cap (0,1)$. If $w$ is a primitive word, then $I_{a(w)}$ is a maximal quadratic interval and if $I_{a}$ is a maximal quadratic interval, then $w(a)$ is primitive.
\end{prop}

\begin{proof} First let $a = [0;a_1 \cdots a_{2n+1}] \in \mathbb Q \cap (0,1)$ be given and set $m=a_1+\cdots + a_{2n+1}$. Write $w(a) = w_1 \cdots w_m$. Suppose $I_{a}$ is maximal, so that (\ref{q:max1}) holds. For any $k$, 
\[w_{k+1}\cdots w_m=1^p 0^{a_{2\ell}}\cdots 1^{a_{2n+1}}, \text { or }\, w_{k+1}\cdots a_m=0^p 1^{a_{2\ell+1}}\cdots 1^{w_{2n+1}}.\]
In both cases, it follows directly from (\ref{q:max1}), that  
\[ w_{k+1}\cdots w_m0^{\infty}\preceq w_1\cdots w_{m-k}0^{\infty}\quad\textrm{for all}\quad 0\le k\le m-1.\]
So we only need to show that $(1-w_1)\cdots (1-w_{m-k})0^{\infty}\prec w_{k+1}\cdots w_m0^{\infty}$, and for this it suffices to consider the case $w_{k+1}\cdots w_m=0^p 1^{a_{2\ell+1}}\cdots 1^{a_{2n+1}}$. Now $p\le a_{2\ell}$ and $a_{2\ell}\le a_1$. If $p<a_1$, then the desired strict inequality holds and we are done. Assume $p=a_1$, then $a_{2\ell}=a_1$, and 
$w_{k+1}\cdots w_m=0^{a_{2\ell}} 1^{a_{2\ell+1}}\cdots 1^{a_{2n+1}}$. Assume for the sake of getting a contradiction that $w_{k+1}\cdots w_m\preceq(1-w_1)\cdots (1-w_{m-k})$. From (\ref{q:max1}) this implies $w_{k+1}\cdots w_m=(1-w_1)\cdots (1-w_{m-k})$, and hence $a_{2\ell}\cdots a_{2n+1}=a_1\cdots a_{2n-2\ell+2}$. By repeatedly applying this, we see that there exist integers $r$ and $s\le 2\ell -1$ such that
\[a_1\cdots a_{2n+1}=(a_1\cdots a_{2\ell-1})^ra_1\cdots a_s,\] 
and
\[a_{2\ell}\cdots a_{2n+1}a_1\cdots a_{2\ell-1}=(a_1\cdots a_{2\ell-1})^{r-1}a_1\cdots a_s(a_1\cdots a_{2\ell -1}).\]
Since 
\[ a_1\cdots a_{2n+1}  <_g a_{2\ell}\cdots a_{2n+1}a_1\cdots a_{2\ell-1},\]
we see that they differ in their last block of length $2\ell -1$, which occurs at the odd position $2n-2\ell +3$. This implies that $a_{s+1}\cdots a_{2\ell-1} a_1\cdots a_s <_g a_1\cdots a_{2\ell-1}$.
However, this contradicts the inequality $a_1\cdots a_{2n+1}<_g a_{2n-2\ell+3}\cdots a_{2n+1}a_1\cdots a_{2n-2\ell +2}$. Hence, $(1-w_1)\cdots (1-w_{m-k})0^{\infty}\prec w_{k+1}\cdots w_m0^{\infty}$, and $w(a)$ is admissible. Then $w(a)$ is primitive by Proposition~\ref{equivalenceDerong}.\\

Now let $w= w_1 \cdots w_m = 1^{a_1}0^{a_2}\cdots 1^{a_{2n+1}} \in \{0,1\}^*$ be a primitive word. By Proposition~\ref{equivalenceDerong} then $w$ is admissible. To prove that $I_{a(w)}$ is maximal, we verify (\ref{q:max1}) by considering strings starting with even and odd indices separately.

\medskip
\noindent
Let $k=a_1+\cdots +a_{2\ell-1}$. By admissibility we have $(1-w_1)\cdots (1-w_{m-k})\prec 0^{a_{2\ell}}\cdots 1^{a_{2n+1}}$, so there is a least $j\le 2n-2\ell+1$ such that $a_j\not= a_{2\ell+j-1}$. Admissibility also implies that $a_1\cdots a_j<_g a_{\ell}\cdots a_{2\ell+j-1}$ and hence $a_1\cdots a_{2n+1}<_g a_{2\ell}\cdots a_{2n+1}a_1\cdots a_{2\ell-1}$ as required.

\medskip
\noindent
We consider now the case $k=a_1+\cdots+ a_{2\ell}$. By admissibility  
\[w_{k+1}\cdots w_{m}=1^{a_{2\ell+1}}0^{a_{2\ell +2}}\cdots 1^{a_{2n+1}}\preceq w_1\cdots w_{m-k}.\]
If there exists a $1\le j\le 2n-2\ell$ such that $a_j\not=a_{2\ell+j}$, then admissibility would imply that $a_1\cdots a_{2n-2\ell}<_g a_{2\ell} \cdots a_{2n}$, and hence $a_1\cdots a_{2n+1}<_g a_{2\ell+1}\cdots a_{2n+1}a_1\cdots a_{2\ell}$. So assume that $a_j=a_{2\ell+j}$ for all $1 \le j \le 2n-2\ell$. By admissibility, we see that $w_1\cdots w_{m-k}=1^{a_1}\cdots 0^{a_{2n-2\ell}}1^t$ with $t = a_{2n+1}\le a_{2n-2\ell+1}$. If $a_{2n+1}< a_{2n-2\ell+1}$, then $a_1\cdots a_{2n-2\ell+1}<_g a_{2\ell} \cdots a_{2n+1}$ and hence $a_1\cdots a_{2n+1}<_g a_{2\ell+1}\cdots a_{2n+1}a_1\cdots a_{2\ell}$. We consider now the case $a_{2n+1}= a_{2n-2\ell+1}$. Using the equality $a_{2\ell+1}\cdots a_{2n+1}=a_1\cdots a_{2n-2\ell+1}$ repeatedly, we can write 
\[a_1\cdots a_{2n+1}=(a_1\cdots a_{2\ell})^ra_1\cdots a_s,\] 
and
\[a_{2\ell+1}\cdots a_{2n+1}a_1\cdots a_{2\ell}=(a_1\cdots a_{2\ell})^{r-1}a_1\cdots a_s(a_1\cdots a_{2\ell})\]
with $s\le 2\ell$. We see that they differ in their last block of length $2\ell$, which occurs at the even position $2n-2\ell +2$. Since by admissibility
\[(1-w_1)\cdots (1-w_k)=0^{a_1}1^{a_2}\cdots 1^{a_{2\ell}}\prec w_{m-k}\cdots w_m=0^{a_{2n-2\ell+2}}1^{a_{2n-2\ell+3}}\cdots 1^{a_{2n+1}},\]
this would imply that 
\[a_1\cdots a_{2\ell}<_g a_{s+1}\cdots a_{2n-2\ell +2}\cdots a_{2n+1}=a_{2\ell}a_1\cdots a_s\]
and hence
\[a_1\cdots a_{2n+1}=a_1\cdots a_{2n-2\ell+1}a_{2n-2\ell +2}\cdots a_{2n+1}<_g a_{2\ell+1}\cdots a_{2n+1}a_1\cdots a_{2\ell}\] 
as required. Therefore, $I_a$ is maximal.
\end{proof}

So, every matching interval for $\{ S_\eta \}_{\eta \in [1,2]}$ is coded by a primitive word. Next we show that the matching intervals exhibit a type of period doubling behaviour that was already observed numerically for the $\alpha$-continued fraction maps in \cite[Section 4.2]{CMPT10} and investigated further in \cite{BCIT13}.

\subsection{Cascades of matching intervals and non-matching parameters}
In the next two propositions we prove that attached to each matching interval we find a whole cascade of matching intervals separated by elements from the non-matching set $\mathcal N$ for which a Markov partition exists and with an accumulation point that is a transcendental element of $\mathcal N$. For a primitive word $w$, let $J_w = \frac1{\varphi(I_{a(w)})}$ be a matching interval and set
\[ L(w) = \frac1{\varphi(a^-(w))} \quad \text{and} \quad R(w) = \frac1{\varphi(a^+(w))},\]
so that $J_w = (L(w),R(w))$. Recall the definition of $\psi$ from \eqref{q:psi}.

\begin{prop}\label{p:cascade}
Let $w=w_1\cdots w_m$ be a primitive word. Then $L(w) = R(\psi(w))$.
\end{prop}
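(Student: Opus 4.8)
The plan is to prove the identity by directly unwinding the definition of $R$ applied to $\psi(\omega)$. Since $\omega$ is primitive, Lemma~\ref{l:psi}(c) guarantees that $\psi(\omega)$ is primitive as well, so that $R(\psi(\omega))$ is genuinely defined by the formula of this section. The block $\psi(\omega) = \omega_1 \cdots \omega_m (1-\omega_1) \cdots (1-\omega_{m-1}) 1$ has length $2m$ and final digit $1$, so writing $x_{2m} = 2^{2m} \decdot \psi(\omega)$ for the integer attached to it through (\ref{q:formulaxn}), we have $R(\psi(\omega)) = \frac{2^{2m}-1}{x_{2m}-1}$. Everything then reduces to expressing $x_{2m}$ in terms of $x_m$.

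First I would split the $2m$ binary places of $\psi(\omega)$ into the first $m$, which reproduce $\omega$ shifted by $m$ places and hence contribute $2^m x_m$, and the last $m$, namely $(1-\omega_1) \cdots (1-\omega_{m-1})1$. For the complemented part I would use the geometric sum $\sum_{i=1}^{m-1} 2^{m-i} = 2^m - 2$ together with $\sum_{i=1}^{m-1} \omega_i 2^{m-i} = x_m - 1$, the latter relying on $\omega_m = 1$ from condition (i) of Definition~\ref{d:primitive}. Collecting these together with the trailing $1$ gives $x_{2m} = (2^m-1)x_m + 2^m$, so that
\[ x_{2m} - 1 = (2^m-1)(x_m+1). \]
The crux of the argument is then the elementary factorisation $2^{2m}-1 = (2^m-1)(2^m+1)$, which lets the common factor $2^m - 1$ cancel:
\[ R(\psi(\omega)) = \frac{(2^m-1)(2^m+1)}{(2^m-1)(x_m+1)} = \frac{2^m+1}{x_m+1} = L(\omega). \]

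There is no serious obstacle here; the whole proposition is a short computation whose only delicate point is the bookkeeping for the complemented half of $\psi(\omega)$ and the use of $\omega_m = 1$. For a more conceptual alternative I would instead invoke Proposition~\ref{p:lrsequences}: since $\psi(\omega)$ is primitive, part (ii) yields $d_{R(\psi(\omega))} = \big( \omega_1 \cdots \omega_m (1-\omega_1) \cdots (1-\omega_{m-1}) 0 \big)^{\infty}$, while part (i) gives $d_{L(\omega)} = \big( \omega_1 \cdots \omega_m (1-\omega_1) \cdots (1-\omega_m) \big)^{\infty}$. Because $\omega_m = 1$ forces $1-\omega_m = 0$, these two periodic sequences coincide, and the injectivity of $\alpha \mapsto d_\alpha$ recorded in Lemma~\ref{l:order} then forces $L(\omega) = R(\psi(\omega))$.
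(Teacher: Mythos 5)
Your main computation coincides with the paper's proof: both expand $x_{2m}$ as $2^m x_m + 2^m - x_m$, factor $x_{2m}-1 = (2^m-1)(x_m+1)$, and cancel against $2^{2m}-1 = (2^m-1)(2^m+1)$, so the proposal is correct and takes essentially the same route (your bookkeeping via $\sum_{i=1}^{m-1} 2^{m-i} = 2^m-2$ and $\sum_{i=1}^{m-1}\omega_i 2^{m-i} = x_m-1$ is just the paper's middle line spelled out). Your alternative argument via Proposition~\ref{p:lrsequences} together with the injectivity of $\alpha \mapsto d_\alpha$ from Lemma~\ref{l:order} is also sound and non-circular, since both of those results precede this proposition in the paper.
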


\begin{proof}
By \eqref{q:rc} and Proposition~\ref{p:primtomax}, $L(w) = \frac{2^m +1}{2^m v(w)+1}$ and $R(\psi(w)) = \frac{2^{2m} -1}{2^{2m}v(\psi(w))-1}$. Note that
\begin{eqnarray*}
2^{2m}v(\psi(w)) &=& w_1 2^{2m-1} + \cdots +w_{m-1} 2^{m+1} + 2^m + (1- w_1) 2^{m-1} + \cdots + (1-w_{m-1})2 + 1\\
& = & 2^m (2^m v(w))+2^m- 2^m v(w)=2^m(2^m v(w)+1)-2^m v(w).
\end{eqnarray*}
This implies 
\[ 2^{2m}v(\psi(w))-1=2^m(2^mv(w)+1)-(2^mv(w)+1)=(2^m-1)(2^mv(w)+1).\]
Hence,
\[ \frac{2^{2m}-1}{2^{2m}v(\psi(w))-1} = \frac{(2^m+1)(2^m-1)}{(2^m-1)(2^mv(w)+1)} = \frac{2^m+1}{2^mv(w)+1 }. \qedhere\]
\end{proof}

\noindent So, attached to each matching interval is a cascade of matching intervals corresponding to the words $\psi^n(w)$. Set $\underline{w} := \lim_{n \to \infty} \psi^n(w)$. Note that $\underline{w}$ does not depend on where in the cascade we start. Write $p_w =  v(\underline{w})$,
then
\[ \lim_{n \to \infty} L\big( \psi^n(w)\big) = \lim_{n \to \infty} \frac{2^{2^nm}+1}{2^{2^nm}v(\psi^n(w))+1} = \lim_{n \to \infty} \frac{1+ \frac1{2^{2^nm}}}{v(\psi^n(w)) + \frac1{2^{2^nm}}} = \frac1{p_w}.\]

\begin{ex}\label{x:tm}
Consider the primitive word $11$. Then $J_{11} = \big( \frac54, \frac32)$, so for each $\eta \in J_{11}$, $S_{\eta}$ has matching after 2 steps. We also have $\psi(11)=1101$ and $J_{1101} = \big( \frac{17}{14}, \frac54 \big)$. For any $\eta$ in this interval, $S_{\eta}$ has matching after four steps. For $\eta=\frac54$ we have $1-\eta = -\frac14$ and
\[ \begin{array}{llll}
S_{\frac54}(1) = \frac34, & S_{\frac54}^2(1) = \frac14, & S_{\frac54}^3(1) = \frac12, & S_{\frac54}^4(1)=1.\\
\\
S_{\frac54}\big(1-\frac54\big) = -\frac12, & S_{\frac54}^2\big(-\frac14\big)=-1, & S_{\frac54}^3\big(-\frac14\big) = -\frac34, & S_{\frac54}^4\big(-\frac14\big) = -\frac14.
\end{array}\]
The limit $\underline{11} = \lim_{n \to \infty} \psi^n(11)$ is the shifted Thue-Morse sequence. Recall that the Thue-Morse substitution is given by
\[ 0 \mapsto 01, \quad 1 \mapsto 10.\]
The Thue-Morse sequence is the fixed point of this substitution, which is
\[ t = 0110 \, 1001 \, 10010110 \, 1001011001101001 \cdots,\]
and the Thue-Morse constant $p^*$ is the number that has this sequence as its base 2 expansions, i.e., $p^* = \sum_{n \ge 1} \frac{t_n}{2^n} \approx 0.412454$. The limit sequence $\underline{11}$ is the sequence obtained when we shift the Thue-Morse sequence one place to the left, $\underline{11}=\sigma(t)$. For the corresponding constant we get
\[ \lim_{n \to \infty} L\big( \psi^n(11) \big) = \frac1{p_{11}} = \frac{1}{2p^*}  \approx 1.212216,\]
which is transcendental (see \cite{Dek77}).
\end{ex}

\vskip .2cm
The previous example illustrates a general pattern. Let $w=w_1 \cdots w_m \in \{0,1\}^m$ be a primitive word and consider the right end point $R(w)$ of the matching interval $J_w$. Then $d_{R(w)} = (w_1 \cdots w_{m-1}0)^{\infty}$ by \eqref{q:rc} and $R(w) \in \mathcal N$. It holds that $S_{R(w)}^m(1)=1$ and thus $S_{R(w)}^m(1-R(w))=1-R(w)$. In other words, $S_{R(w)}$ has a Markov partition, but no matching.
Results from \cite{BCIT13} give us the following proposition.

\begin{prop}
Let $w \in \{0,1\}^m$ be a primitive word. Then $\frac1{p_w} \in \mathcal N$ and $\frac1{p_w}$ is transcendental.
\end{prop}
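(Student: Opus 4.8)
The plan is to treat the two assertions of the proposition separately: the membership $\frac1{p_\omega}\in\mathcal N$ is soft and follows from closedness, while the transcendence of $\frac1{p_\omega}$ is the real content.

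For $\frac1{p_\omega}\in\mathcal N$ I would argue by a limiting procedure. By Lemma~\ref{l:psi}(c) every iterate $\psi^n(\omega)$ is again a primitive block, so $J_{\psi^n(\omega)}$ is a genuine matching interval and its left endpoint $L(\psi^n(\omega))$ lies in $\mathcal N$, as recorded just before Proposition~\ref{p:lrsequences}. The cascade relation $L(\omega)=R(\psi(\omega))$ of Proposition~\ref{p:cascade} forces the intervals $J_{\psi^n(\omega)}$ to be stacked adjacently, so that $L(\psi^n(\omega))$ decreases to $\frac1{p_\omega}$, by the computation preceding the present proposition. Since $\mathcal N$ is closed (property (iv)), the limit $\frac1{p_\omega}$ of the points $L(\psi^n(\omega))\in\mathcal N$ again belongs to $\mathcal N$. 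Equivalently, $\frac1{p_\omega}$ is interior to no matching interval, because any such open $J_{\omega'}$ would have to meet, hence coincide with, one of the pairwise disjoint $J_{\psi^n(\omega)}$, contradicting the strict inequality $\frac1{p_\omega}<L(\psi^n(\omega))$.

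For transcendence, since $x\mapsto\frac1x$ preserves algebraicity it suffices to prove that $p_\omega=\decdot\underline{\omega}$ is transcendental, i.e.\ that the binary sequence $\underline{\omega}$ yields a transcendental number. Writing $\omega=u1$ with $u=\omega_1\cdots\omega_{m-1}$, the definition of $\psi$ reads $\psi(u1)=u1\,\overline u\,1$, where $\overline u$ is the bitwise complement; thus $\underline{\omega}$ is the fixed point of the length-doubling rule $P\mapsto P\,\widetilde{\overline P}$, the tilde denoting a flip of the final bit. I would first compare $\underline{\omega}$ with the pure period-doubling sequence $V=\lim V_n$ defined by $V_{n+1}=V_n\overline{V_n}$, $V_0=\omega$, for which an easy induction gives $V[i]=\omega[i\bmod m]\oplus t_{\lfloor i/m\rfloor}$ with $t$ the Thue--Morse sequence. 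The sequences $\underline{\omega}$ and $V$ agree except on the sparse set of positions $\{2^n m:n\ge1\}$ created by the final-bit flips. Both $V$ (a bitwise XOR of a periodic sequence and a block-read Thue--Morse sequence) and the indicator of $\{2^n m\}$ have finite $2$-kernel, so $\underline{\omega}$ is $2$-automatic; since it is not eventually periodic (it inherits the aperiodicity of Thue--Morse), $p_\omega$ is irrational, and by the theorem of Adamczewski and Bugeaud that an automatic irrational real number is transcendental, $p_\omega$, and hence $\frac1{p_\omega}$, is transcendental. In the base case $\omega=11$ this recovers exactly the transcendence of $\frac1{2p^*}$ via \cite{Dek77}, and for general $\omega$ the statement is the instance of the transcendence results of \cite{BCIT13} applied to these cascade limits.

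The main obstacle is the transcendence, and within it the precise verification that $\underline{\omega}$ is automatic for an arbitrary seed length $m$: one must control the effect of the final-bit flips, confirm the finiteness of the $2$-kernel, and establish aperiodicity before any off-the-shelf transcendence theorem can be invoked. An alternative route I would keep in reserve, should the automaton bookkeeping prove unwieldy, is to bypass the explicit $2$-automatic structure and instead derive from $P_{n+1}=P_n\,\widetilde{\overline{P_n}}$ a Mahler-type functional equation for the generating function of $\underline{\omega}$, and then apply Mahler's method directly, mirroring the classical proof of transcendence for the Thue--Morse constant.
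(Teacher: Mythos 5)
Your first half is sound and is essentially the paper's own argument made slightly more explicit: the paper notes that $\mathcal N$ is precisely the complement of the union of the matching intervals and concludes $\frac1{p_\omega}=\lim_{n\to\infty}L(\psi^n(\omega))\in\mathcal N$; your appeal to closedness of $\mathcal N$ (property (iv)) plus disjointness of matching intervals is the same soft step. For transcendence the paper takes a much shorter route than you do: writing $\eta=\omega_1\cdots\omega_{m-1}$, it observes $\psi^j(\omega)=\Delta^j(\eta)1$ for the doubling operator $\Delta(\eta)=\eta1\hat\eta$ of \cite{BCIT13}, so that $p_\omega=\tau_\infty(\eta)$, and then quotes \cite[Proposition 4.7]{BCIT13} verbatim. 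Your automaticity route is therefore genuinely different and more self-contained --- but as written it contains a concrete false step.

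The claim that $\underline{\omega}$ and $V$ agree except on $\{2^nm:n\ge1\}$ is wrong, because the final-bit flips \emph{propagate} through the subsequent complementations. If $D_n$ denotes the set of positions (0-indexed) where $P_n=\psi^n(\omega)$ differs from $V_n$, then $\overline{P_n}$ and $\overline{V_n}$ differ exactly on $D_n$, and the flip toggles one more position, giving
\[ D_{n+1}=D_n\cup\bigl(2^nm+(D_n\,\triangle\,\{2^nm-1\})\bigr), \]
not $D_n\cup\{2^{n+1}m-1\}$. Already for $\omega=11$ this fails at the second step: $P_2=\psi(1101)=11010011$ while $V_2=11000011$, so $D_2=\{3\}$ --- the flip your claim predicts at position $7$ has \emph{cancelled} --- and then $D_3=\{3,11,15\}$ (for general $m$: $\{2m-1,6m-1,8m-1\}$), which contains positions not of the form $2^nm-1$. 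In the limit the flipped positions are the last letters of the $m$-blocks indexed by integers with an odd number of trailing ones in base $2$, a set of block-density $\frac13$, nothing like sparse; so your ``finite $2$-kernel'' verification is aimed at the wrong difference sequence. The strategy is reparable: the block-flip indicator $u_k$ satisfies $u_{2^n+j}=u_j\oplus[\,j=2^n-1\,]$ and is $2$-automatic, the block-read Thue--Morse part is $2$-automatic (by Christol, if $A(X)$ is algebraic over $\mathbb F_2(X)$ then so is $A(X^m)\,(1+X+\cdots+X^{m-1})$, which handles $i\mapsto t_{\lfloor i/m\rfloor}$), and an XOR of these with a periodic sequence is again $2$-automatic; aperiodicity follows since the subsequence of $\underline{\omega}$ along first letters of $m$-blocks is the complemented Thue--Morse sequence, and Adamczewski--Bugeaud then yields transcendence. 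But as submitted, the key combinatorial identification on which the automaticity rests is false, so the proof has a genuine gap that must be repaired along these lines (or bypassed via your Mahler-equation fallback, which faces the same bookkeeping).
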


\begin{proof}
Since $\mathcal N$ contains exactly those points that are not in any matching interval $J_w$, we have $\frac1{p_w} = \lim_{n \to \infty} L(\psi^n(w)) \in \mathcal N$. In \cite{BCIT13} the maps $\tau_0$ and $\Delta$ are defined on words $u=u_1\cdots u_n \in \{0,1\}^*$ by  $\tau_0(u) = v ((u_1 \cdots u_n0)^{\infty})$ and $\Delta(u)= u_1 \cdots u_n1 (1-u_1)\cdots (1-u_n)$. Then for each $j \ge 1$ they set $\tau_j(u) = \tau_0(\Delta^j(u))$ and $\tau_\infty(u) = \lim_{j \to \infty} \tau_j (u)$. It is proven in \cite[Proposition 4.7]{BCIT13} that if $\tau_0(u) \in \Lambda$, then $\tau_\infty(u)$ is transcendental. The fact that $\frac1{p_w}$ is transcendental thus follows from \cite[Proposition 4.7]{BCIT13} by observing that $p_w = \tau_\infty (w_1 \cdots w_{m-1})$.
\end{proof}

\begin{remark}\label{r:othersets}{\rm
We have already seen that $\mathcal N = \frac1{\varphi(\mathcal E)\setminus \{0\}}$, where $\mathcal E$ is the bifurcation set for the family $\{ T_\alpha \}_{\alpha \in [0,1]}$ defined in \eqref{q:E}. In \cite{BCIT13} the set $\mathcal E$ was linked to the real slice of the boundary of the Mandelbrot set, the set $\Lambda$ of codings for the kneading invariants of unimodal maps encoded by $\Lambda$, and the set of univoque bases. So, this article adds a new item to this list of correspondences. The common link between the results from \cite{BCIT13} and our case is the set $\Gamma$ from (\ref{q:gamma}), which was shown by Allouche and Cosnard (\cite{AC83,AC01}) to have connections with univoque numbers. Given a number $1< \beta < 2$, one can express any real number $x \in \big[0, \frac1{\beta-1} \big]$ as a {\em $\beta$-expansion}: $x = \sum_{n \ge 1} \frac{c_n}{\beta^n}$ for some sequence $(c_n)_{n \ge 1} \in \{0,1\}^{\mathbb N}$. Typically a number $x$ has uncountably many different expansions of this form. The number $1 < \beta <2$ is called {\em univoque} if there is a unique sequence $(c_n)_{n \ge 1} \in \{0,1\}^{\mathbb N}$ such that $1= \sum_{n \ge 1}\frac{c_n}{\beta^n}$, i.e., if 1 has a unique $\beta$-expansion. Let $\mathcal U$ denote the set of univoque bases. The properties of $\mathcal U$ were studied by many authors. There exists an equivalent characterisation of univoque numbers in terms of strongly admissible sequences, which is mainly due to Parry (\cite{Par60}), see also \cite{EJK90}, and which is stronger than the notion of admissibility from Definition~\ref{d:admissible}.

\vskip .2cm
A sequence $(c_n)_{n \ge 1} \in \{0,1\}^{\mathbb N}$ is {\em strongly admissible} if and only if
\[ \left\{
\begin{array}{ll}
c_{k+1} c_{k+2} \cdots \prec c_1c_2 \cdots, & \text{if } c_k=0,\\
\\
c_{k+1}c_{k+2}\cdots \succ c_1c_2 \cdots, & \text{if } c_k=1,
\end{array}\right.\]
for all $k$.  It is easy to check that strong admissibility is equivalent to 
\[(1-c_1)(1-c_2)\cdots \prec c_{k+1}c_{k+2}\cdots \prec c_1c_2 \cdots\]
for all $k\ge 1$. In \cite{EJK90} it is proved that a sequence $(c_n)_{n \ge 1}$ is strongly admissible if and only if there is a univoque $\beta>1$ such that $1 = \sum_{n \ge 1} \frac{c_n}{\beta^n}$. In \cite{KL07}, the authors studied the topological structure of the set $\mathcal U$ as well as its closure $\overline{\mathcal U}$. Furthermore they characterised the sequences that belong  to $\overline{\mathcal U}$, namely $(d_n)_{n \ge 1}\in \overline{\mathcal U}$ if and only if
\[(1-d_1)(1-d_2)\cdots \prec d_{k+1}d_{k+2}\cdots \preceq d_1d_2 \cdots\]
for all $k\ge 1$. They also showed that if $(d_n)_{n \ge 1}\in \overline{\mathcal U}$, then $(d_n)_{n \ge 1}$ has an arbitrarily long initial block that is admissible in the sense of Definition \ref{d:admissible}. In \cite{AC01} the authors showed that there is a one-to-one correspondence between strong admissible sequences and the points in $\Gamma$ that do not have a periodic binary expansion. In this way the univoque bases are related to a subset of $\mathcal N$.
}\end{remark}

\section{The continuity of the frequency function}
In this section we consider the unique absolutely continuous invariant measure $\mu_\eta$ and the associated {\em frequency function} $\eta \mapsto \mu_{\eta} \big(\big[-\frac12, \frac12 \big] \big)$ with the goal of giving an explicit formula for $\mu_{\eta} \big(\big[-\frac12, \frac12 \big] \big)$ on each of the matching intervals $J_w$. Recall the expression for the invariant probability density from (\ref{q:density}). Suppose that for some $\eta$ we have matching after $m$ steps. Hence, $S^m_\eta (1)=S^m_\eta(1-\eta)$ and $S^m_\eta (\eta-1) = S^m_\eta (-1)$. Moreover, before matching we have $S^n_\eta (1) = S^n_\eta (1-\eta) + \eta$ and $S^n_\eta (\eta-1) = S^n_\eta (-1) + \eta$. Then
\[ f_\eta(x) = \frac1C \sum_{n = 0}^{m-1} \frac{1}{2^{n+1}} \Big(1_{[S^n_\eta (1-\eta), S^n_\eta (1))}(x) + 1_{[S^n_\eta(-1), S^n_\eta (\eta-1))}(x) \Big),\]
where $C$ is the normalising constant. $C$ is related to the total measure, which is
\begin{eqnarray*}
\mu_\eta([-1,1]) &=& \frac1{C}\int_{-1}^1 f_\eta(x)dx = \frac2C \sum_{n=0}^{m-1} \frac1{2^{n+1}} \big( S^n_\eta(1)-S^n_\eta(1-\eta) \big)\\
&=& \frac{\eta}C \Big( \frac{1-\frac1{2^m}}{1-\frac12} \Big) = \frac{2\eta}{C}\Big(1-\frac1{2^m} \Big)=1,
\end{eqnarray*}
where we have used that $S^n_\eta (1) = S^n_\eta (1-\eta) + \eta$ before matching. Hence,
\begin{equation}\label{q:C}
\frac1C =\frac1{\eta} \frac{2^{m-1}}{2^m-1}.
\end{equation}
We first prove that the map $\eta \mapsto f_\eta$ is continuous.
\begin{theorem}\label{t:convL1}
Let $\bar \eta \in [1,2]$ and let $\{ \eta_k \}_{k \ge 1} \subseteq [1,2]$ be a sequence converging to $\bar \eta$. Then $f_{\eta_k} \to f_{\bar \eta}$ in $L^1$.
\end{theorem}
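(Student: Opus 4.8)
The plan is to argue directly from the explicit density \eqref{q:density}. Write $h_\alpha=\hat h_\alpha/C(\alpha)$ with $\hat h_\alpha=\sum_{n\ge0}g^n_\alpha$, $C(\alpha)=\int_{-1}^1\hat h_\alpha\,dx$, and
\[
g^n_\alpha=\tfrac{1}{2^{n+1}}\bigl(1_{[-1,S^n_\alpha(\alpha-1))}-1_{[-1,S^n_\alpha(-1))}+1_{[-1,S^n_\alpha(1))}-1_{[-1,S^n_\alpha(1-\alpha))}\bigr).
\]
The first step is a reduction in the data one must track. By symmetry $S^n_\alpha(-1)=-S^n_\alpha(1)$ and $S^n_\alpha(\alpha-1)=-S^n_\alpha(1-\alpha)$, and for $n<m(\alpha)$ Proposition~\ref{p:0alpha} gives $S^n_\alpha(1-\alpha)=S^n_\alpha(1)-\alpha$; hence $g^n_\alpha=2^{-(n+1)}\,G_\alpha\!\bigl(S^n_\alpha(1)\bigr)$ depends on $\alpha$ and on the single orbit point $t=S^n_\alpha(1)$ only, through the profile
\[
G_\alpha(t)=1_{[-1,\alpha-t)}-1_{[-1,-t)}+1_{[-1,t)}-1_{[-1,t-\alpha)},
\]
which satisfies $\|G_\alpha(t)-G_\alpha(t')\|_1\le 4|t-t'|$ and the crucial symmetry $G_\alpha(\alpha-t)=G_\alpha(t)$. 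Moreover $\|g^n_\alpha\|_1=\alpha\,2^{-n}$ for $n<m(\alpha)$ and $g^n_\alpha=0$ for $n\ge m(\alpha)$, so the tails are uniformly small, $\bigl\|\sum_{n\ge N}g^n_\alpha\bigr\|_1\le 2^{2-N}$, and $C(\alpha)=\sum_{n<m(\alpha)}\alpha\,2^{-n}\in[\alpha,2\alpha]$, whence $1/C(\alpha)\le 1$ uniformly on $\bigl(1,\tfrac32\bigr)$. (The boundary parameters $\alpha\in\{1\}\cup[\tfrac32,2]$ are covered by the explicit continuous densities \eqref{q:h2}.)

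Since $h_\alpha=\hat h_\alpha/C(\alpha)$ and $C\ge1$, continuity will follow once we control the finitely many leading terms and $C$. The only obstruction to continuity of $\alpha\mapsto S^n_\alpha(1)$ is a parameter at which the orbit of $1$ meets the critical point $\tfrac12$: if $S^j_{\bar\alpha}(1)\ne\tfrac12$ for all $j<n$, the digits $d_{\bar\alpha,1},\dots,d_{\bar\alpha,n}$ are locally constant, $S^n_\alpha(1)$ is locally affine, and $g^n_\alpha\to g^n_{\bar\alpha}$ in $L^1$. Call $\bar\alpha$ \emph{good} if $S^n_{\bar\alpha}(1)\ne\tfrac12$ for every $n<m(\bar\alpha)$; this includes every $\bar\alpha$ in the interior of a matching interval $J_\omega$ (where $m(\alpha)\equiv m$ and $\hat h_\alpha=\sum_{n<m}g^n_\alpha$) and every non-matching $\bar\alpha$ whose orbit of $1$ is aperiodic (where $m(\alpha_k)\to\infty$). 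At a good $\bar\alpha$, termwise convergence together with the uniform tail bound gives $\hat h_{\alpha_k}\to\hat h_{\bar\alpha}$ and $C(\alpha_k)\to C(\bar\alpha)$ in $L^1$, hence $h_{\alpha_k}\to h_{\bar\alpha}$.

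The main obstacle is the remaining, countable, set of \emph{bad} parameters: the endpoints $L(\omega),R(\omega)$ of the matching intervals, at which the orbit of $1$ is periodic and meets $\tfrac12$ (by Proposition~\ref{p:lrsequences}, $S^{m-1}_{R(\omega)}(1)=\tfrac12$ and $S^{2m-1}_{L(\omega)}(1)=\tfrac12$). At such a $\bar\alpha$ there is a Markov partition but no matching, so $\hat h_{\bar\alpha}$ retains all its terms, whereas along an adjacent matching interval $J_\zeta$ of matching time $k=|\zeta|$ the density truncates to $\sum_{n<k}g^n_\alpha$; thus $\hat h_\alpha$ and $C(\alpha)$ are each genuinely discontinuous from that side and the previous argument fails. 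The resolution is that the two discontinuities cancel after normalisation. Indeed, periodicity gives $S^{n+k}_{\bar\alpha}(1)=S^n_{\bar\alpha}(1)$ at an $R$-endpoint and the half-period antisymmetry $S^{n+k}_{\bar\alpha}(1)=\alpha-S^n_{\bar\alpha}(1)$ at the $\psi$-endpoints of Proposition~\ref{p:cascade}; in either case the symmetry $G_\alpha(\alpha-t)=G_\alpha(t)$ forces $g^{n+k}_{\bar\alpha}=2^{-k}g^n_{\bar\alpha}$ for all $n$, so that $\hat h_{\bar\alpha}=(1-2^{-k})^{-1}\sum_{n<k}g^n_{\bar\alpha}$ and, integrating, $C(\bar\alpha)=(1-2^{-k})^{-1}\sum_{n<k}\bar\alpha\,2^{-n}$. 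The first $k$ terms vary continuously (the orbit of $1$ meets $\tfrac12$ only at step $k-1$ among these, where $S^{k-1}_\alpha(1)$ is still continuous), so the one-sided limit along $J_\zeta$ is
\[
\lim_{\alpha\to\bar\alpha}\frac{\hat h_\alpha}{C(\alpha)}
=\frac{\sum_{n<k}g^n_{\bar\alpha}}{\sum_{n<k}\bar\alpha\,2^{-n}}
=\frac{(1-2^{-k})\,\hat h_{\bar\alpha}}{(1-2^{-k})\,C(\bar\alpha)}
=h_{\bar\alpha},
\]
the factor $1-2^{-k}$ cancelling. From the opposite side $m(\alpha)\to\infty$ and the good-parameter argument applies verbatim; combining the one-sided limits yields $h_{\alpha_k}\to h_{\bar\alpha}$ for every sequence $\alpha_k\to\bar\alpha$.

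The step I expect to be genuinely delicate is this last one: showing that $\hat h_\alpha$ and $C(\alpha)$ jump by exactly the same factor $1-2^{-k}$ at the endpoints of the matching intervals, so that only their quotient is continuous. This is precisely where the self-similar, period-doubling structure of Section~\ref{s:maximal} (the operation $\psi$ and Proposition~\ref{p:cascade}) enters, via the identity $g^{n+k}_{\bar\alpha}=2^{-k}g^n_{\bar\alpha}$ and the profile symmetry $G_\alpha(\alpha-t)=G_\alpha(t)$; everything else is the routine combination of termwise convergence and a uniform tail estimate.
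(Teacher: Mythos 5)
Your proposal takes a genuinely different route from the paper's. The paper never touches the explicit series (\ref{q:density}) in this proof: it establishes Lasota--Yorke inequalities for the Perron--Frobenius operators of $S_\alpha$ and $S_\alpha^2$ with a lower bound on cylinder lengths that is locally uniform in $\alpha$, derives uniform $BV$ and sup bounds for the densities, extracts an a.e.\ and $L^1$ convergent subsequence by Helly's Theorem, identifies the limit as the invariant density of $S_{\bar\alpha}$ by testing against compactly supported $C^1$ functions and invoking uniqueness, and finally upgrades subsequential convergence to full $L^1$ convergence via Vitali; the point $\frac32$ is then checked by an explicit one-sided computation. Much of your alternative is sound and, if completed, would avoid the $BV$ machinery altogether while yielding exact one-sided limits at the bifurcation parameters: the reduction $g^n_\alpha=2^{-(n+1)}G_\alpha(S^n_\alpha(1))$ before matching (Proposition~\ref{p:0alpha} plus oddness of $S_\alpha$), the symmetry $G_\alpha(\alpha-t)=G_\alpha(t)$, the uniform tail bound, and the cancellation of the factor $1-2^{-k}$ all check out; in particular the half-period relation $S^{n+m}_{\bar\alpha}(1)=\bar\alpha-S^n_{\bar\alpha}(1)$ at $L(\omega)$ does follow from $d_{L(\omega)}=\big(\omega_1\cdots\omega_m(1-\omega_1)\cdots(1-\omega_m)\big)^\infty$ (Proposition~\ref{p:lrsequences}) together with digit complementation for $D$.

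There is, however, a genuine gap at the bad parameters. The assertion that ``from the opposite side $m(\alpha)\to\infty$ and the good-parameter argument applies verbatim'' cannot be taken verbatim: that argument rests on $S^j_{\bar\alpha}(1)\neq\frac12$ for the relevant $j$, which is exactly what fails at a bad $\bar\alpha$. For $n\ge p$ (the period of $d_{\bar\alpha}$) the map $\alpha\mapsto S^n_\alpha(1)$ is genuinely discontinuous at $\bar\alpha$, so termwise convergence $g^n_{\alpha_k}\to g^n_{\bar\alpha}$ is not automatic for a sequence approaching from the non-matching side; such a sequence need not lie in any adjacent interval and may run through infinitely many small matching intervals and points of $\mathcal N$. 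What rescues the argument --- and must be formulated and proved --- is one-sided stability: because $\frac12$ carries digit $0$ in (\ref{q:sdigit}), for $\alpha\downarrow R(\omega)$ every critical passage of the perturbed orbit lands on the $\le\frac12$ side, so each $S^n_\alpha(1)$ is right-continuous at $R(\omega)$. One must moreover prove $m(\alpha)\to\infty$ on that side, and this is \emph{false} precisely when the orbit of $\bar\alpha$ also meets the other edge $\bar\alpha-\frac12$ of the hole: then $\bar\alpha$ is simultaneously an $L$-endpoint (via Proposition~\ref{p:cascade} every $L(\omega)$ equals $R(\psi(\omega))$, and at $\frac32$ one even has $m\equiv 1$ on the right), and on that side one must instead run your half-period cancellation. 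Your case split silently assumes one knows which configuration occurs at a given bad point.

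A second unproved step is the identification of the bad set with the endpoints $L(\omega),R(\omega)$, which you need in order to know that the left neighbour of every bad point is a matching interval whose matching time equals the period of $d_{\bar\alpha}$. Both gaps are fixable from Section~3 of the paper: if the orbit of $1$ meets $\bar\alpha-\frac12$ at time $j$ then it meets $\frac12$ at time $2j+1$, so a bad parameter always has $d_{\bar\alpha}=(w0)^\infty$ purely periodic of some period $p$; a direct perturbation of the critical passage at time $p-1$ shows that all $\alpha<\bar\alpha$ close to $\bar\alpha$ have matching exactly at time $p$ with digit prefix $w_1\cdots w_{p-1}1$, and Theorem~\ref{t:intervals} then exhibits $\bar\alpha$ as the right endpoint of that matching interval. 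Finally, at $\frac32$ your parenthetical appeal to \eqref{q:h2} only covers approach from within $\big[\frac32,2\big]$; that the left-hand series limit agrees with \eqref{q:h2} should be noted explicitly (it follows from uniqueness of the invariant density, or from your cancellation with $k=1$, since $S^{n+1}_{3/2}(1)=\frac32-S^n_{3/2}(1)$). As written, though, the final paragraph of your proof does not go through for arbitrary sequences, so the argument is incomplete at precisely the delicate points it set out to treat.
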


For the proof we use the Perron-Frobenius operator $\mathcal L_\eta$ for $S_\eta$, which is given by
\[ (\La f)(x) = \frac12 \Big( f\Big( \frac{x}{2} \Big) + 1_{(\eta-2, \eta-1)}(x) f\Big( \frac{x-\eta}{2}\Big) +1_{(1-\eta, 2-\eta)}(x) f\Big( \frac{x+\eta}{2}\Big) \Big)\]
for $f \in L^1(\lambda)$, where as before $\lambda$ denotes the one-dimensional Lebesgue measure. For a function $f:[-1,1]\to \mathbb R$, let $Var(f)$ denote its total variation and use $BV$ to denote the set of functions $f:[-1,1]\to \mathbb R$ of bounded variation, so with $Var(f)<\infty$. Since $S_\eta$ is an expanding interval map with constant slope, it is well known, see for example \cite[Theorem 5.2.1]{BG}, that an absolutely continuous invariant density for $S_\eta$ is obtained from the Lebesgue almost everywhere limit of a subsequence of $\big( \frac1n \sum_{j=0}^{n-1} \mathcal L_\eta^j(1) \big)_{n \ge 1}$.

\begin{proof}[Proof of Theorem~\ref{t:convL1}]
First let $\eta \neq \frac32$. Let $\mathcal P_\eta$ be the collection of intervals of monotonicity of $S_\eta^2$. To be precise, for $\eta \in \big[1, \frac32 \big)$ we have
\[ \mathcal P_\eta = \Big\{ \Big(-1, -\frac{2\eta+1}{2} \Big), \Big( -\frac{2\eta+1}{4}, -\frac12 \Big), \Big( -\frac12, -\frac14 \Big), \Big( -\frac14, \frac14 \Big), \Big( \frac14, \frac12 \Big), \Big( \frac12, \frac{2\eta+1}{4} \Big), \Big( \frac{2\eta+1}{4},1 \Big) \Big\},\]
and for $\eta \in \big(\frac32,2 \big]$ this becomes
\[ \mathcal P_\eta = \Big\{ \Big(-1, \frac{1-2\eta}{2} \Big), \Big( \frac{1-2\eta}{4}, -\frac12 \Big), \Big( -\frac12, -\frac14 \Big), \Big( -\frac14, \frac14 \Big), \Big( \frac14, \frac12 \Big), \Big( \frac12, \frac{2\eta-1}{4} \Big), \Big( \frac{2\eta-1}{4},1 \Big) \Big\}.\]
Since $S_\eta$ is a piecewise expanding interval map with constant slope
it follows from \cite[Lemma 5.2.1]{BG} that for $f\in BV$,
\begin{eqnarray*}
 Var(\mathcal L_\eta f) & \le &  Var (f) + 2 \int_{[-1,1]} |f| d\lambda,\\
 Var (\mathcal L^2_\eta f) & \le & \frac12 Var (f) + \frac1{2\delta(\eta)} \int_{[-1,1]} |f| d\lambda,
\end{eqnarray*}
where $\delta (\eta) = \min \{ \lambda(I)\, : \, I \in \mathcal P_\eta \}= \big| \frac{2\eta -3}{4} \big|$. For $n \ge 2$, write $n=2j+i$ with $i \in \{0,1\}$. Then
\begin{equation}\label{q:varn}
 \begin{split}
Var (\mathcal L_\eta^n f) =\ & Var (\mathcal L^j_{S_\eta^2} \mathcal L_\eta^i f) \le  \frac1{2^j} Var(\mathcal L_\eta^i f) + \sum_{k=0}^{j-1} \frac1{2^k} \frac1{2\delta(\eta)} \int_{[-1,1]} |f| d\lambda\\
\le \ & \frac1{2^j} Var (f) + \int_{[-1,1]}|f| d\lambda \Big( 2 + \frac1{\delta(\eta)} \Big).
\end{split}
\end{equation}
Fix some $\bar \eta \in [1,2] \setminus \{ \frac32\}$. Let
\[ 0 < \varepsilon <  \min \Big\{ \Big|\bar \eta - \frac32\Big|,  \delta(\bar \eta) \Big\}.\]
Then for all $\eta \in [\bar \eta -\varepsilon, \bar \eta + \varepsilon]$, we have $\delta(\eta) \ge \delta(\bar \eta) - \frac12 \varepsilon \ge \frac{\delta(\bar \eta)}{2}$. For $k\ge 1$, define the sequence of functions $f_{k,n} = \frac1n \sum_{j=0}^{n-1} \mathcal L_{\eta_k}^j (1)$. Recall that there is a subsequence of $(f_{k,n})$ converging to $f_{\eta,k}$ $\lambda$-a.e. For ease of notation we just assume that $\lim_{n \to \infty} f_{k,n}= f_{\eta_k}$ $\lambda$-a.e. By \eqref{q:varn},
\[ Var (f_{k,n} ) \le \frac1n \sum_{j=0}^{n-1} Var (\mathcal L^j_{\eta_k} (1)) \le 4 + \frac2{\delta(\eta_k)},\]
so for all $k$ sufficiently large, we have
\[ Var (f_{k,n} ) \le 4 + \frac2{\delta( \bar \eta) - \frac12 \varepsilon} \le 4 + \frac4{\delta(\bar \eta)}.\]
Also,
\[ \sup | f_{k,n} | \le Var(f_{k,n}) + \int_{[-1,1]} f_{k,n} d\lambda \le Var(f_{k,n})  + \frac1n \sum_{j=0}^{n-1} \int_{[-1,1]}  \mathcal L^j_{\eta_k} (1) d\lambda \le 6 + \frac4{\delta( \bar \eta)}.\]
Since both of these bounds are independent of $\eta_k$ and $n$, we have $Var (f_{\eta_k}), \sup |f_{\eta_k}| \le 6 + \frac4{\delta( \bar \eta)}$ for each $k$ large enough. From Helly's Theorem it then follows that there is a subsequence $\{k_i\}$ and an $f_{\infty} \in BV$ such that $f_{\eta_{k_i}} \to f_{\infty}$ in $L^1(\lambda)$ and $\lambda$-a.e.~and with $\sup |f_{\infty}| , Var(f_\infty) \le  6 + \frac4{\delta( \bar \eta)}$. We show that $f_\infty = f_{\bar \eta}$ by proving that for each Borel set $B \subseteq [-1,1]$ we have
\begin{equation}\label{q:hinfty}
\int_B f_\infty d\lambda = \int_{S_{\bar \eta}^{-1}(B)} f_\infty d\lambda.
\end{equation}
The desired result then follows from the uniqueness of the invariant probability density. First note that $1_B \in L^1(\lambda)$, so it can be approximated arbitrarily closely by compactly supported $C^1$ functions. So instead of (\ref{q:hinfty}) we prove that
\[ \big| \int_{[-1,1]} (g \circ S_{\bar \eta}) f_\infty d\lambda - \int_{[-1,1]} g \, f_\infty d\lambda \big| =0\]
for any compactly supported $C^1$ function $g$ on $[-1,1]$. (Hence $\| g\|_{\infty} < \infty$.) We split this into three parts:
\begin{align*}
\big| \int_{[-1,1]} (g \circ S_{\bar \eta}) f_\infty d\lambda - \int_{[-1,1]} g\, f_\infty d\lambda \big| & \le \big| \int_{[-1,1]} (g \circ S_{\bar \eta}) f_\infty d\lambda - \int_{[-1,1]} (g \circ S_{\bar \eta})f_{\eta_{k_i}} d\lambda \big|\\
& + \big| \int_{[-1,1]} (g \circ S_{\bar \eta}) f_{\eta_{k_i}} d\lambda - \int_{[-1,1]} (g \circ S_{\eta_{k_i}})f_{\eta_{k_i}} d\lambda \big|\\
& + \big| \int_{[-1,1]} (g \circ S_{\eta_{k_i}}) f_{\eta_{k_i}} d\lambda - \int_{[-1,1]} g \, f_\infty d\lambda \big|.
\end{align*}
Then for the first part we have
\begin{align*}
\big| \int_{[-1,1]} (g \circ S_{\bar \eta}) f_\infty d\lambda - \int_{[-1,1]}& (g \circ S_{\bar \eta})f_{\eta_{k_i}} d\lambda \big|\\
&\le \big| \int_{[-1,1]} \big(\sup_{x \in [-1,1]} g(x) \big) (f_\infty - f_{\eta_{k_i}}) d\lambda\\
& \le \| g \|_\infty \int_{[-1,1]} |f_\infty - f_{\eta_{k_i}}| d\lambda = \|g \|_\infty \| f_\infty - f_{\eta_{k_i}} \|_{L^1}.
\end{align*}
For the third part we get
\begin{eqnarray*}
\big| \int_{[-1,1]} (g \circ S_{\eta_{k_i}}) f_{\eta_{k_i}} d\lambda - \int_{[-1,1]} g \, f_\infty d\lambda \big| & = & \big| \int_{[-1,1]} g \, f_{\eta_{k_i}} d\lambda - \int_{[-1,1]} g \, f_\infty d\lambda \big|\\
&\le & \| g \|_\infty \| f_\infty - f_{\eta_{k_i}} \|_{L^1}.
\end{eqnarray*}
Hence, these two parts converge to 0 as $i \to \infty$. Now, for the middle part we have
\begin{align*}
\big| \int_{[-1,1]} (g \circ S_{\bar \eta}) f_{\eta_{k_i}} d\lambda - \int_{[-1,1]} & (g \circ S_{\eta_{k_i}})f_{\eta_{k_i}} d\lambda \big|\\
& \le  \int_{[-1,1]} \big| (g\circ S_{\bar \eta}) - (g\circ S_{\eta_{k_i}}) \big| f_{\eta_{k_i}} d\lambda\\
& \le  \big( \sup_{x \in [-1,1]} f_{\eta_{k_i}}(x) \big) \int_{[-1,1]} \big| (g\circ S_{\bar \eta}) - (g\circ S_{\eta_{k_i}}) \big| d\lambda\\
& \le  \lambda([-1,1])  \int_{[-1,1]} \big| (g\circ S_{\bar \eta}) - (g\circ S_{\eta_{k_i}}) \big| d\lambda.
\end{align*}
We split this integral into three parts again, now according to the intervals of monotonicity of $S_{\bar \eta}$. Note that by the Dominated Convergence Theorem and by the continuity of $g$, we have
\[ \lim_{i \to \infty} \int_{[-1,-1/2)} \big| (g\circ S_{\bar \eta}) - (g\circ S_{\eta_{k_i}}) \big| d\lambda = \int_{-1}^{-1/2} \lim_{i \to \infty} \big| g(2x + \eta_{k_i}) - g(2x+\eta) \big| dx =0.\]
Similarly, we can prove that the integral converges to 0 on $\big[-\frac12, \frac12 \big]$ and $\big( \frac12, 1\big]$. Hence, $f_\infty = f_{\bar \eta}$ Lebesgue a.e.
In fact, the proof shows that for each subsequence of $(f_{\eta_k})$ there is a further subsequence that converges a.e.~(and in $L^1$) to $f_{\bar \eta}$. This is equivalent to saying that the sequence $( f_{\eta_k})$ converges in measure to $f_{\bar \eta}$. Since $f_{\eta_k}\le 6 + \frac4{\delta( \bar \eta)}$ for any $k$ large enough, this implies that $(f_{\eta_k})$ is uniformly integrable from a certain $k$ on, so by Vitali's Theorem the sequence $(f_{\eta_k})$ converges in $L^1$ to $f_{\bar \eta}$.

\vskip .2cm
The above proof shows that $\eta \mapsto f_\eta$ is continuous at any point $\bar \eta \in [1,2]\setminus \{ \frac32 \}$. For $\frac32$, we have by \eqref{q:h2} that
\[ \begin{split}
\lim_{\eta \downarrow \frac32} \int_{[-1,1]} |f_\eta & -f_{\frac32}|d\lambda\\
  =\ & \lim_{\eta \downarrow \frac32} \Big( 2 \int_{-1}^{1-\eta} \Big(\frac13 - \frac1{2\eta} \Big)d x + 2 \int_{1-\eta}^{-\frac12} \Big( \frac1{\eta}-\frac13 \Big) dx  + \int_{-\frac12}^{\frac12} \Big( \frac23 - \frac1{\eta} \Big) dx \Big)=0.
\end{split}\]
For any $\eta$ in the matching interval $J_{11} = (\frac43, \frac32)$ that is close enough to $\frac32$ we have
\[ f_\eta = \frac1{6\eta} \big( 2 + 1_{(2-2\eta, 2\eta-2)} + 1_{(\eta-2, 2-\eta)} + 2\cdot 1_{(1-\eta, \eta-1)} \big),\]
which is easily checked by direct computation.  Then
\[ \begin{split}
\lim_{\eta \uparrow \frac32} \int_{[-1,1]}  |f_\eta - f_{\frac32}|d \lambda
 = \ & \lim_{\eta \uparrow \frac32} \Big( 2 \int_{-1}^{2-2\eta} \Big(\frac13 - \frac1{3\eta} \Big) d x + 2 \int_{2-2\eta}^{\eta-2} \Big( \frac1{2\eta} - \frac13 \Big) d x \\
& + 2 \int_{\eta-2}^{-\frac12} \Big( \frac2{3\eta}-\frac13 \Big) d x + 2 \int_{-\frac12}^{1-\eta} \Big( \frac23 - \frac2{3\eta} \Big) d x + \int_{1-\eta}^{\eta-1} \Big( \frac23 - \frac1{\eta} \Big) d x \Big) =0.
\end{split}\]
This gives the result also for $\eta = \frac32$.
\end{proof}

\begin{cor}\label{c:measure0}
The frequency function $\eta \mapsto \mu_{\eta} \big(\big[-\frac12, \frac12 \big] \big)$ is continuous.
\end{cor}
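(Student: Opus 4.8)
The plan is to deduce this immediately from the continuity theorem just established, using the crucial fact that the interval $\big[-\frac12,\frac12\big]$ over which we integrate is fixed and does not move with $\alpha$. Since $\mu_\alpha$ is absolutely continuous with respect to Lebesgue measure with density $h_\alpha$, I would first write
\[ \mu_{\alpha}\Big(\Big[-\frac12,\frac12\Big]\Big) = \int_{-\frac12}^{\frac12} h_\alpha \, d\lambda. \]

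Then, fixing $\bar\alpha \in [1,2]$ and taking an arbitrary sequence $\{\alpha_k\}_{k \ge 1} \subseteq [1,2]$ with $\alpha_k \to \bar\alpha$, I would estimate the difference of the measures by the $L^1$-distance of the densities:
\[ \Big| \mu_{\alpha_k}\Big(\Big[-\frac12,\frac12\Big]\Big) - \mu_{\bar\alpha}\Big(\Big[-\frac12,\frac12\Big]\Big) \Big| = \Big| \int_{-\frac12}^{\frac12} (h_{\alpha_k} - h_{\bar\alpha}) \, d\lambda \Big| \le \int_{-1}^{1} |h_{\alpha_k} - h_{\bar\alpha}| \, d\lambda = \| h_{\alpha_k} - h_{\bar\alpha} \|_{L^1}. \]
By the previous theorem the right-hand side tends to $0$ as $k \to \infty$, and since the sequence $\{\alpha_k\}$ was arbitrary this establishes continuity of $\alpha \mapsto \mu_{\alpha}\big(\big[-\frac12,\frac12\big]\big)$ at $\bar\alpha$.

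There is essentially no obstacle here, precisely because the domain of integration is independent of the parameter: $L^1$-convergence of the densities transfers directly to convergence of the integrals over any fixed Borel set. The only point worth flagging is that, had the endpoints of the cylinder depended on $\alpha$, one would additionally need to control the contribution near the moving endpoints; since the digit $0$ corresponds to the fixed cylinder $\big[-\frac12,\frac12\big]$, no such complication occurs and the bound by $\|h_{\alpha_k}-h_{\bar\alpha}\|_{L^1}$ suffices.
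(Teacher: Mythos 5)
Your proposal is correct and is essentially identical to the paper's own proof: both bound $\big|\mu_{\alpha_k}\big(\big[-\frac12,\frac12\big]\big)-\mu_{\bar\alpha}\big(\big[-\frac12,\frac12\big]\big)\big|$ by $\int_{[-1,1]}|h_{\alpha_k}-h_{\bar\alpha}|\,d\lambda$ and invoke the $L^1$-convergence of the densities from the preceding theorem. Your added remark about the integration domain being independent of $\alpha$ is a fair observation but does not change the argument.
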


\begin{proof}
This immediately follows from the previous result, since for any $\eta \in [1,2]$ and any sequence $\{ \eta_k \}_{k \ge 1} \subseteq [1,2]$ converging to $\eta$, we have
\[ \begin{split}
\lim_{k \to \infty} \big|\mu_{\eta} \Big(\Big[-\frac12, \frac12 \Big] \Big) - \mu_{\eta_k}\Big(\Big[-\frac12, \frac12 \Big] \Big) \big| \le \ & \lim_{k \to \infty} \int_{[-\frac12, \frac12]} |f_{\eta} - f_{\eta_k}| d\lambda\\
 \le \ & \lim_{k \to \infty} \int_{[-1, 1]} |f_{\eta} - f_{\eta_k}| d\lambda =0. \qedhere
 \end{split}\]
\end{proof}

We now give a precise description of the measure of the middle interval $\big[ -\frac12, \frac12 \big]$. The fact that before matching $S^k_\eta (1) = S^k_\eta (1-\eta) + \eta$ in particular implies that $S^k_\eta(1)$ will only visit $\big[-\frac12, \frac12 \big]$ and $\big( \frac12, 1\big]$ and $S^k_\eta (1-\eta)$ will only visit $\big[-1, -\frac12 \big)$ and $\big[-\frac12, \frac12 \big]$. Moreover, $S^k_\eta(1)$ and $S^k_\eta (1-\eta)$ will never both be in $\big[-\frac12, \frac12 \big]$. We also know that matching occurs immediately after $S^k_\eta(1) \in \big(\frac12, 1 \big]$ and $S^k_\eta (1-\eta) \in \big[-1, -\frac12 \big)$. So, up to one step before matching we always have exactly one of the two orbits in $\big[-\frac12, \frac12 \big]$.

\vskip .2cm
Let $w = w_1 \cdots w_m$ be a primitive word and $\eta \in J_w$. In order to determine $\mu_\eta\big( \big[-\frac12, \frac12 \big] \big)$ we need to describe functions of the form $x \mapsto 1_{[S^k_\eta (1-\eta), S^k_\eta(1))}(x) 1_{[-\frac12, \frac12]}(x)$. Note that for $0 \le k \le m-2$ if $w_{k+1}=1$, then
\[ 1_{[S^k_\eta (1-\eta), S^k_\eta (1))}(x) 1_{[-\frac12, \frac12]}(x) = 1_{[S^k_\eta (1-\eta), \frac12]}(x)\]
and if $w_{k+1}=0$, then 
\[ 1_{[S^k_\eta (1-\eta), S^k_\eta (1))}(x) 1_{[-\frac12, \frac12]}(x) = 1_{[-\frac12, S^k_\eta(1))}(x).\]
Moreover, 
\[ 1_{[S^{m-1}_\eta (1-\eta), S^{m-1}_\eta(1))}(x) 1_{[-\frac12, \frac12]}(x) = 1_{[-\frac12, \frac12]}(x).\]
Due to symmetry, the measure of $[-\frac12, \frac12]$ is then given by
\begin{eqnarray*}
\mu_\eta\Big(\Big[-\frac12, \frac12\Big]\Big) &=& \frac1C \int_{[-\frac12, \frac12]} f_\eta(x) dx \\
&=& \frac1C \sum_{\stackrel{0\le k \le m-2:}{w_{k+1=1}}} \frac1{2^k} \Big( \frac12 - S^k_\eta (1-\eta) \Big) +\frac1C\sum_{\stackrel{0\le k \le m-2:}{w_{k+1=0}}} \frac1{2^k} \Big( S^k_\eta(1) +\frac12 \Big) + \frac1{C 2^{m-1}}\\
&=& \frac1C \Big( \frac1{2^{m-1}} + \sum_{k=0}^{m-2} \frac1{2^{k+1}} + \sum_{\stackrel{0\le k \le m-2:}{w_{k+1=1}}} \Big( \frac{\eta}{2^k} - \frac1{2^k}S^k_\eta(1) \Big) + \sum_{\stackrel{0\le k \le m-2:}{w_{k+1=0}}} \frac1{2^k} S^k_\eta(1) \Big).
\end{eqnarray*}
Observe that if $m=1$, the above two summations are zero. In this case by \eqref{q:C}
\[ \mu_\eta\Big(\Big[-\frac12, \frac12 \Big]\Big)=\frac1C = \frac1{\eta},\] 
as we saw before. For $m\ge 2$, we get
\[ \mu_\eta\Big(\Big[-\frac12, \frac12\Big]\Big) = \frac1C \Big( 1+ \sum_{\stackrel{0\le k \le m-2:}{w_{k+1=1}}} \Big( \frac{\eta}{2^k} - \frac1{2^k}S^k_\eta(1) \Big) + \sum_{\stackrel{0\le k \le m-2:}{w_{k+1=0}}} \frac1{2^k} S^k_\eta(1) \Big).\]
If $m=2$, then by \eqref{q:C} we find $\frac1C = \frac2{3\eta}$ and $w_1=1$, so for all $\eta\in J_{11}$,
\[ \mu_\eta\Big(\Big[-\frac12, \frac12 \Big]\Big) = \frac1C (1+ \eta -1)= \frac23.\]
Assume $m\ge 3$. Recall the notation $w_i^j = w_i \cdots w_j$ and also recall that for $k \le m$ we have $S^k_\eta (1) = 2^k-2^k v(w_1^k)\eta$. Then
\[ \mu_\eta\Big(\Big[-\frac12, \frac12\Big]\Big)  
= \frac1C \Big( \eta + \sum_{\stackrel{1\le k \le m-2:}{w_{k+1=1}}} \Big( \frac{\eta}{2^k} - 1 +v(w_1^k)\eta \Big) +\sum_{\stackrel{1\le k \le m-2:}{w_{k+1=0}}} \Big( 1 - v(w_1^k)\eta\Big) \Big).\]
Let
\[ \eta(w) = \# \{ 2\le k \le m-1\, : \, w_k=0 \} - \# \{ 2\le k\le m-1 \, : \, w_k=1 \}. \]
Then
\begin{equation}\label{q:anothermu}
\begin{split}
\mu_\eta\Big(\Big[-\frac12, \frac12\Big]\Big) =\ & \frac1C \Big(\eta+\eta(w) + \eta \Big(
\sum_{\stackrel{1\le k \le m-2:}{w_{k+1=1}}} \Big( \frac1{2^k} + v(w_1^k) \Big) - \sum_{\stackrel{1\le k \le m-2:}{w_{k+1=0}}} v(w_1^k) \Big)\\
=\ & \frac{2^{m-1}}{2^m-1} \Big( \frac{\eta(w)}{\eta} + 2v(w_1^{m-1}) + \sum_{\stackrel{1\le k \le m-2:}{w_{k+1=1}}} v(w_1^k) - \sum_{\stackrel{1\le k \le m-2:}{w_{k+1=0}}} v(w_1^k) \Big),
\end{split}
\end{equation}
where we have used that $\displaystyle \sum_{\stackrel{1\le k \le m-2:}{w_{k+1=1}}} \frac1{2^k}= \sum_{1 \le k \le m-2} \frac{w_{k+1}}{2^k} =2v(w_1^{m-1})-1$. If $w_{k+1}=1$, then $v(w_1^{k+1})=v(w_1^k) + \frac1{2^{k+1}}$ and if $w_{k+1}=0$, then $v(w_1^{k+1}) =v(w_1^k)$. This gives
\[\mu_\eta\Big(\Big[-\frac12, \frac12\Big]\Big) 
= \frac{2^{m-1}}{2^m-1} \Big( \frac{\eta(w)}{\eta} + v(w_1^{m-1})  + \sum_{\stackrel{1\le k \le m-1:}{w_k=1}} v(w_1^k) - \sum_{\stackrel{2\le k \le m-1:}{w_k=0}} v(w_1^k) \Big).\]
For any primitive word $w_1 \cdots w_m$ ($m\ge 3$)  the expression
\[ K_w:= v(w_1^{m-1}) +\sum_{\stackrel{1\le k \le m-1:}{w_k=1}} v(w_1^k)  - \sum_{\stackrel{1\le k \le m-1:}{w_k=0}}  v(w_1^k)\]
has a constant value on $J_w$. As a result 
\begin{equation}\label{q:measure0}
\mu_\eta\Big(\Big[-\frac12, \frac12\Big]\Big) =\frac{2^{m-1}}{2^m-1} \Big( \frac{\eta(w)}{\eta} +K_{w}\Big),
\end{equation}
as a function of $\eta$ on the interval $J_{w}$, is increasing for $\eta(w)<0$, decreasing for $\eta(w)>0$, and constant if $\eta(w)=0$. Note that using \eqref{q:measure0} we can for any $\eta \in \big(1,\frac32\big) \setminus \mathcal N$ explicitly calculate the frequency of the digit 0 in the signed binary expansion $ (d_{\eta,n}(x))_{n \ge 1}$ for typical $x$. We now have Theorem~\ref{t:23}.
\begin{proof}[Proof of Theorem~\ref{t:23}]
The continuity of $\eta \mapsto \mu_\eta\big( \big[ -\frac12, \frac12 \big] \big)$ is proved in Corollary~\ref{c:measure0} and the other two statements of the theorem follow from \eqref{q:measure0}.
\end{proof}

\section{The maximal frequency of the digit 0}

It remains to show that the map $\eta \mapsto \mu_\eta\big(\big[-\frac12, \frac12\big]\big)$ takes its maximal value $\frac23$ exactly on the interval $\big[ \frac65, \frac32 \big]$. Figure~\ref{f:maximalinterval} shows the value of $\mu_\eta\big(\big[-\frac12, \frac12\big]\big)$ for several primitive words $w$. We can obtain the value on the big plateau in the middle easily from the relation with the $\alpha$-continued fraction maps as follows.

\vskip .2cm
From \eqref{q:measure0} it follows that the monotonicity behaviour of $\eta \mapsto \mu_\eta\big(\big[-\frac12, \frac12\big]\big)$ equals that of the entropy function $\alpha \mapsto h_{\nu_\alpha}(T_{\alpha})$ for $\alpha$-continued fractions as described in \eqref{q:hmonotone}. To see this, let $w=1^{a_1}\cdots 1^{a_{2n+1}}$ be a primitive word. Then by Proposition \ref{maximaladmissible} the quadratic interval $I_a$ is maximal with $a=[0;a_1 \cdots a_{2n+1}]$. For $N=1+\displaystyle\sum_{j \text{ even}}a_j$ and $M=-1 +\displaystyle\sum_{j \text{ odd}}a_j$ we have $N-M=\eta(w)$. Therefore, by \eqref{q:hmonotone} we get that $\mu_\eta\big(\big[-\frac12, \frac12\big]\big)$ increases, decreases or is constant on $J_w$ if and only if $h_{\nu_\alpha}(T_\alpha)$ increases, decreases or is constant on $I_{a(w)}$ respectively. As mentioned in Section~\ref{ss:alpha} it was shown in \cite[Section 10]{KSS12} that the entropy function $\alpha \mapsto h_{\nu_\alpha}(T_{\alpha})$ is constant on the interval $[g,g^2]$, where $g=\frac{\sqrt5 -1}{2}$. Since $\varphi(g)=\frac32$ and $\varphi(g^2)=\frac65$, we have that $\mu_\eta\big(\big[-\frac12, \frac12\big]\big) = \mu_{\frac32}\big(\big[-\frac12, \frac12\big]\big) =\frac23$ for any $\eta \in \big[ \frac65, \frac32 \big]$. This gives the plateau on the top of the graph in Figure~\ref{f:maximalinterval}.

\vskip .2cm
To prove Theorem~\ref{t:max} we still need to show that $\mu_\eta\big(\big[-\frac12, \frac12\big]\big) < \frac23$ for all other values of $\eta$. We know from \eqref{q:>32} that this holds for $\eta > \frac32$. Before we start on the proof of Theorem~\ref{t:max}, we consider the set of primitive words of the form $1^m$ separately in the next example.

\begin{figure}
\includegraphics[height=.2\textheight]{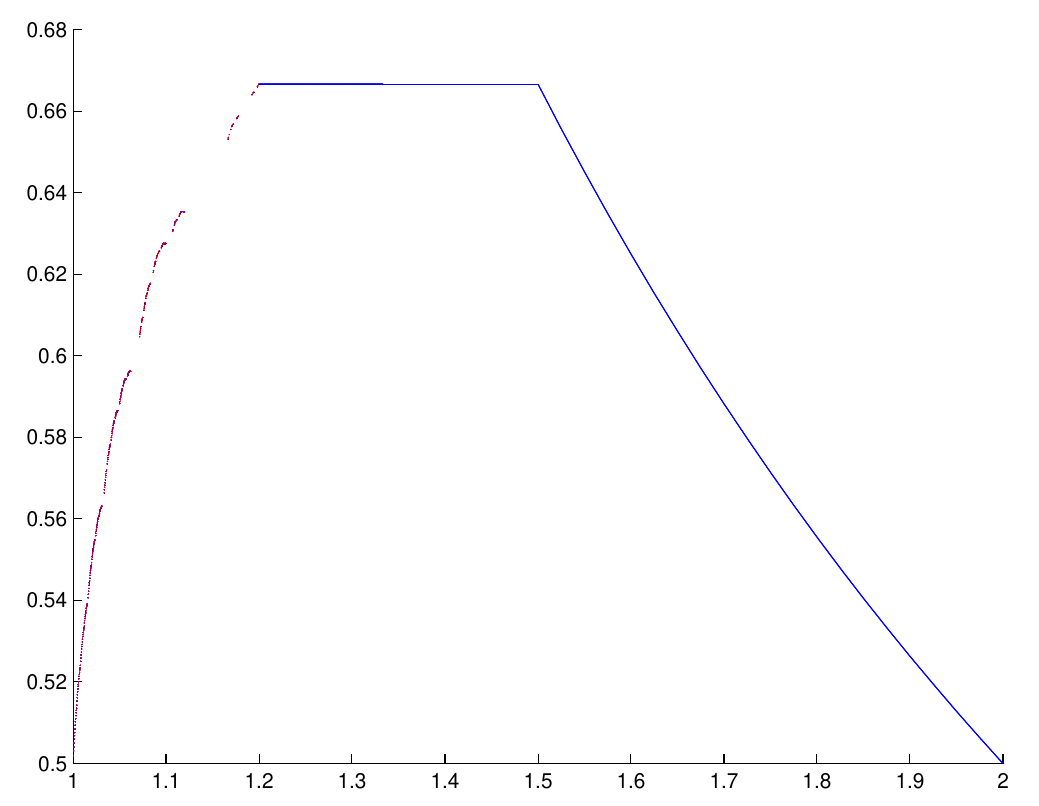}
\caption{The value of $\mu_\eta\big(\big[-\frac12, \frac12\big]\big)$ on all matching intervals $J_w$ with $m\le 13$. The picture is made by Niels Langeveld.}
\label{f:maximalinterval}
\end{figure}

\begin{ex}\label{x:all1}
Let $w=1^m$ with $m \ge 3$. Then $\eta(w) = -(m-2)$, so that the maximal value attained for $\mu_\eta \big( \big[ -\frac12, \frac12 \big] \big)$ is attained when $\frac1{\eta}$ is smallest, i.e., if
\[ \frac1{\eta} = v((1^{m-1}0)^\infty) = \frac12 \frac1{1-\frac1{2^m}} + \cdots + \frac1{2^{m-1}} \frac1{1-\frac1{2^m}} = \frac1{2^m-1} \sum_{k=1}^{m-1} 2^k = \frac{2^m-2}{2^m-1}.\]
For $1 \le n \le m-1$ we have $v(w_1^n) = \sum_{k=1}^{n} \frac1{2^k} = 1-\frac1{2^n}$. Thus, for $\eta \in J_w$ it holds by \eqref{q:measure0} that
\[ \begin{split}
\mu_\eta \Big( \Big[ -\frac12, \frac12 \Big] \Big) \le \ & \frac{2^{m-1}}{2^m-1} \Big( -\frac{(m-2)(2^m-2)}{2^m-1} + 1 - \frac1{2^{m-1}} + \sum_{n=1}^{m-1} \Big( 1-\frac1{2^n} \Big) \Big)\\
=\ & \frac{2^{m-1}}{2^m-1} \Big( -m+2 + \frac{m-2}{2^m-1} + 1- \frac1{2^{m-1}} +m-1 - 2+\frac1{2^{m-1}} \Big)\\
= \ & \frac{2^{m-1}}{2^m-1} \frac{m-2}{2^m-1}.
\end{split}\]
Note that this is less than $\frac23$ if and only if $3(m-2)2^{m-1} < 2(2^m-1)^2$, which is if and only if 
\[ m(2^m+2^{m-1}) + 2^{m+2} < 2^{2m+1} + 2^{m+1} + 2^m +2.\]
This obviously holds for $m=3$. For $m>3$ we have $2m-1>m+2$ and of course $m < 2^m$, so
\[  m(2^m+2^{m-1}) + 2^{m+2} < 2^{2m} + 2^{2m-1} + 2^{m+2} < 2^{2m+1} < 2^{2m+1} + 2^{m+1} + 2^m +2.\]
Hence, also for $m>3$ and any $\eta \in J_{1^m}$ we have that $\mu_\eta \big( \big[ -\frac12, \frac12 \big] \big)< \frac23$. Then the same statement holds on the cascade of matching intervals attached to $J_{1^m}$, i.e., for any $\eta \in J_{\psi^n(1^m)}$, $n \ge 1$. 
\end{ex}

The proof of Theorem~\ref{t:max} requires the following lemma on the value of $\mu_\eta\big(\big[-\frac12, \frac12\big]\big)$ on the cascades of matching intervals.
\begin{lemma}\label{l:constant}
Let $w$ be a primitive word. Then $\eta(\psi^n(w)) = 0$ for all $n \ge 1$ and the frequency function $\eta \mapsto \mu_\eta \big( \big[ -\frac12, \frac12 \big]\big)$ is constant on the interval $\big[p_w, L(w) \big]$.
\end{lemma}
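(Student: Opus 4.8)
The plan is to prove the two assertions in turn, deriving the second from the first together with formula \eqref{q:measure0} and the continuity established in Corollary~\ref{c:measure0}.

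For the first assertion I would reduce everything to a single application of $\psi$. By Lemma~\ref{l:psi}(c) each iterate $\psi^{n-1}(\omega)$ is again a primitive block, so it suffices to show $\eta(\psi(b)) = 0$ for an arbitrary primitive block $b = b_1 \cdots b_k$ and then apply this with $b = \psi^{n-1}(\omega)$. Writing $\psi(b) = b_1 \cdots b_k (1-b_1) \cdots (1-b_{k-1}) 1$, a word of length $2k$, I would compute $\eta(\psi(b))$ directly by splitting the interior positions $2, \ldots, 2k-1$ into three parts: the positions $2, \ldots, k-1$, which carry $b_2 \cdots b_{k-1}$ and hence contribute exactly $\eta(b)$ to the count (number of $0$'s minus number of $1$'s); the central position $k$, which carries $b_k = 1$ and contributes $-1$; and the positions $k+1, \ldots, 2k-1$, which carry the complemented prefix $(1-b_1)\cdots(1-b_{k-1})$. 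In this last block a $0$ sits exactly where $b_j = 1$ and a $1$ exactly where $b_j = 0$, so its $0$-minus-$1$ contribution equals the number of $1$'s minus the number of $0$'s among $b_1 \cdots b_{k-1}$; since $b_1 = 1$ by primitivity, this equals $1 - \eta(b)$. Adding the three contributions gives $\eta(b) - 1 + (1 - \eta(b)) = 0$, which is the claim.

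For the second assertion I would exploit the cascade of Proposition~\ref{p:cascade}. Each $\psi^n(\omega)$ with $n \ge 1$ is primitive of length $2^n m \ge 4$, so formula \eqref{q:measure0} applies on $J_{\psi^n(\omega)}$; since $\eta(\psi^n(\omega)) = 0$ by the first part, the term $\eta(\cdot)/\alpha$ vanishes and $\alpha \mapsto \mu_\alpha\big(\big[-\tfrac12,\tfrac12\big]\big)$ is identically equal to a constant $c_n$ on $J_{\psi^n(\omega)}$. By Proposition~\ref{p:cascade} we have $R(\psi^{n}(\omega)) = L(\psi^{n-1}(\omega))$, so the intervals $J_{\psi^n(\omega)} = \big(L(\psi^n(\omega)), L(\psi^{n-1}(\omega))\big)$, $n \ge 1$, are consecutive and, together with their right endpoints $L(\psi^{n-1}(\omega))$, cover $\big(\lim_{n\to\infty} L(\psi^n(\omega)), L(\omega)\big]$, where the left endpoint is $\lim_{n\to\infty} L(\psi^n(\omega)) = \tfrac{1}{p_\omega}$ as computed before the statement. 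By Corollary~\ref{c:measure0} the function is continuous, so taking limits at each shared endpoint $L(\psi^n(\omega))$ forces $c_n = c_{n+1}$; hence all $c_n$ coincide with one value $c$. Continuity then carries this value to every shared endpoint and, letting $n \to \infty$, also to the accumulation point $\tfrac{1}{p_\omega}$, giving $\mu_\alpha\big(\big[-\tfrac12,\tfrac12\big]\big) \equiv c$ on the whole closed interval $\big[\tfrac{1}{p_\omega}, L(\omega)\big]$.

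The routine part is the bookkeeping in the first step, which only has to be handled carefully at the three boundary positions ($1$, $k$ and $2k$) that are excluded from or merged into the interior count. The step I expect to be the genuine pressure point is the gluing in the second assertion: constancy holds a priori only on each separate open matching interval, and it is precisely the cascade identity $R(\psi^{n}(\omega)) = L(\psi^{n-1}(\omega))$ combined with the global continuity of $\alpha \mapsto \mu_\alpha\big(\big[-\tfrac12,\tfrac12\big]\big)$ that upgrades the countably many individual constants to a single constant on the whole interval; without continuity one could not rule out jumps across the non-matching endpoints $L(\psi^n(\omega)) \in \mathcal N$.
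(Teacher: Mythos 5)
Your proposal is correct and takes essentially the same route as the paper: the paper proves $\eta(\psi(\omega))=0$ by the same elementary count (it tallies the number of zeros of $\psi(\omega)$ as $m-1$ and reuses the identity $\eta(\omega)=2k+2-m$ for blocks beginning and ending in $1$; your three-block split of the interior positions, giving $\eta(b)-1+(1-\eta(b))=0$, is just a reorganization of that bookkeeping), and its second part is exactly your gluing argument, namely constancy on each $J_{\psi^n(\omega)}$ from (\ref{q:measure0}) with $\eta=0$, adjacency of consecutive intervals via Proposition~\ref{p:cascade}, and Corollary~\ref{c:measure0} to match the constants across the shared endpoints $L(\psi^n(\omega))\in\mathcal N$ and to reach the accumulation point. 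One minor point in your favour: you correctly identify the left endpoint of the interval as $\frac1{p_\omega}=\lim_{n\to\infty}L(\psi^n(\omega))$, whereas the statement and proof in the paper write $p_\omega$, an evident slip since $p_\omega<1$ lies outside the parameter range.
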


\begin{proof}
Let $w=w_1 \cdots w_m$ be a primitive word and let $k$ denote the number of 0's that occur in $w$. Then $\eta(w) = k-(m-2-k) = 2k+2-m$. Since $w_m=1$ and $1-w_1 =0$, the number of 0's in $\psi(w)$ is $k+1+(m-2-k)=m-1$. Hence, $\eta(\psi(w)) = 0$ and by the same reasoning $\eta(\psi^n(w))=0$ for all $n$. By (\ref{q:measure0}) $\eta \mapsto \mu_\eta \big( \big[ -\frac12, \frac12 \big]\big)$ is constant on each interval $J_{\psi^n(w)}$, so that by continuity
\[ \mu_\eta \Big( \Big[ -\frac12, \frac12 \Big]\Big) = \mu_{L(w)} \Big( \Big[ -\frac12, \frac12 \Big]\Big)\]
for all $\eta \in \big[p_w, L(w) \big]$.
\end{proof}

\begin{proof}[Proof of Theorem~\ref{t:max}.]
To prove Theorem~\ref{t:max} it is enough to consider $\eta \in \big(1, \frac65 \big)$. Corollary~\ref{c:measure0} then implies that we only need to consider values of $\eta$ in a matching interval $J_w$ for which $w$ is primitive and $w0^\infty \succ 1(10)^\infty$. The formula obtained in \eqref{q:measure0} shows that whether on $J_w$ the frequency map $\eta \mapsto \mu_\eta \big( \big[ -\frac12, \frac12 \big]\big)$ is increasing, decreasing or constant, depends on the value of $\eta(w)$. By Proposition~\ref{p:cascade} and Lemma~\ref{l:constant} it follows that attached to each matching interval there is a cascade of matching intervals on which the frequency map is constant. Hence, by Corollary~\ref{c:measure0}, to prove that $\mu_\eta \big( \big[ -\frac12, \frac12 \big]\big) < \frac23$ for any $\eta \in \big(1, \frac65 \big)$ it is enough to prove that this is the case for any $\eta$ in a matching interval $J_w$ that satisfies $\eta(w)=0$. This is what we will do.

\vskip .2cm
Let $w = w_1 \cdots w_m = 1^{a_1} 0^{a_2} \cdots 1^{a_{2n+1}}$ be a primitive word with $\eta(w)=0$. Then $m$ is even and
\[ a_1 + \cdots + a_{2n+1}-2=\frac{m-2}{2} = a_2 + \cdots + a_{2n}.\]
Moreover, by Example~\ref{x:all1} we can assume that $w=1^{a_1} 0 w_{a_1+2} \cdots w_m$, so that $m \ge a_1+2$. To give precise estimates on $\mu_\eta \big( \big[ -\frac12, \frac12 \big]\big)$ we rewrite \eqref{q:anothermu} to obtain that
\begin{equation}\label{q:newmeasure0}
\mu_\eta \Big( \Big[ -\frac12, \frac12 \Big] \Big) =  \frac{2^{m-1}}{2^m-1} \Big( 2v(w_1^{m-1}) + \sum_{\stackrel{1 \le \ell \le m-2:}{w_{\ell+1=1}} } \Big( v(w_1^\ell) - \frac1{\eta} \Big) + \sum_{\stackrel{1 \le \ell \le m-2:}{w_{\ell+1=0}} } \Big( \frac1{\eta} - v(w_1^\ell) \Big)\Big).
\end{equation}
Set $s_0=0$ and for $1 \le k \le 2n+1 $, $s_k = a_1 + \cdots + a_k $. Since $\mu_\eta \big( \big[ -\frac12, \frac12 \big]\big)$ is the same for all $\eta \in J_w$, we take $\eta = \frac1{v(w)}$. Note that for any $k$ the $k$-th block of 1's in $w$ satisfies
\[ \sum_{j =s_{2k}+1}^{s_{2k+1}} \frac1{2^j} = \frac1{2^{s_{2k}}} - \frac1{2^{s_{2k+1}}} = \frac{2^{a_{2k+1}}-1}{2^{s_{2k+1}}},\]
so that
\[ \sum_{\stackrel{1 \le \ell \le m-2:}{w_{\ell+1=0}} }  \Big( v(w) - v(w_1^\ell) \Big) = \sum_{j=1}^n \frac{2^{a_{2j+1}}-1}{2^{s_{2j+1}}}(a_2 + \cdots + a_{2j}).\]
On the other hand,
\[ \begin{split}
\sum_{\stackrel{1 \le \ell \le m-2:}{w_{\ell+1=1}} } & \Big( v(w) - v(w_1^\ell) \Big)\\
 =\ & \sum_{2 \le \ell+1 \le s_1} \frac1{2^\ell}v(w_{\ell+1}^m) + \sum_{s_2+1 \le \ell+1 \le s_3} \frac1{2^\ell}v(w_{\ell+1}^m) + \cdots + \sum_{s_{2n}+1 \le \ell \le s_{2n+1}-1} \frac1{2^\ell} v(w_{\ell+1}^m)\\
=\ & 2\Big( v(w) - \frac12 - \frac1{2^m} \Big) - \frac{a_1-1}{2^{s_1}} - \frac{a_3}{2^{s_3}} - \cdots - \frac{a_{2n-1}}{2^{s_{2n-1}}} - \frac{a_{2n+1}-1}{2^{s_{2n+1}}}\\
& + \sum_{j=1}^n (a_1 + a_3 + \cdots + a_{2j-1}-1) \frac{2^{a_{2j+1}}-1}{2^{s_{2j+1}}}\\
=\ & 2v(w)-1-\frac1{2^{m-1}} + \sum_{j=1}^n \frac{(a_1+a_3+\cdots +a_{2j-1}-1)(2^{a_{2j+1}}-1)-a_{2j+1}}{2^{s_{2j+1}}} - \frac{a_1-1}{2^{a_1}}.
\end{split}\]
Inserting this into \eqref{q:newmeasure0} and using that $2v(w_1^{m-1}) = 2v(w)-\frac1{2^{m-1}}$ we get that the inequality
\[ \mu_\eta \Big( \Big[ -\frac12, \frac12 \Big] \Big)
= \frac{2^{m-1}}{2^m-1}\Big[ 1 + \frac{a_1-1}{2^{a_1}} + \sum_{j=1}^n \frac{a_{2j+1} + (2^{a_{2j+1}}-1) (1+ \sum_{\ell=1}^{2j} (-1)^\ell a_\ell)}{2^{s_{2j+1}}} \Big] < \frac23\]
holds if and only if
\begin{equation}\label{q:ineq}
2^{m+1} +  2^m + 2^2 + 3 (a_1-1) 2^{m-a_1} + 3\sum_{j=1}^n R_j(w) < 2^{m+2},
\end{equation}
where we have put
\[ R_j(w) = a_{2j+1} 2^{m-s_{2j+1}}+ 2^{m-s_{2j}}\Big(1+ \sum_{\ell=1}^{2j} (-1)^\ell a_\ell \Big) + 2^{m-s_{2j+1}} \Big( \sum_{\ell=1}^{2j}(-1)^{\ell+1} a_\ell-1\Big).\]
We split the proof into several cases according to the value of $a_1$.

\vskip .2cm
\noindent {\bf (I)} $a_1 \ge 3$: Consider first the term $3 (a_1-1) 2^{m-a_1}$. For $a_1=4$ and $a_1=3$ we can just calculate this and for $a_1 \ge 5$ we can use the fact that $p \le 2^{\frac{p}{2}}$ for any $p \ge 4$ to obtain the following:
\begin{equation}\label{q:est1}
3 (a_1-1) 2^{m-a_1} \le \begin{cases}
2^{m-2} + 2^{m-3}, & \text{if } a_1 \ge 5,\\
2^{m-1}+2^{m-4}, & \text{if }a_1=4,\\
2^{m-1}+2^{m-2}, & \text{if }a_1=3.
\end{cases}\end{equation}
Next consider $R_1(w)$. By primitivity we have $a_1>a_2$. For $a_1=3$ we can make precise estimates by considering the cases $a_2=1$ and $a_2=2$ separately. For $a_2=1$ we have $a_1-a_2-1=1$ and using the fact that $p \le 2^{p-1}$ for all $p \ge 1$ gives
\[
3R_1(w) = 3(a_32^{m-s_3}-2^{m-s_2}+2^{m-s_3})\le 3(2^{m-s_3+a_3} - 2^{m-s_2}) = 0.
 \]
In case $a_2=2$ we get $a_1-a_2-1=0$ and thus
\[ 3R_1(w) = 3 a_3 2^{m-a_3} \le 3 \cdot 2^{m-s_2-1} = 2^{m-5} + 2^{m-6}.\]
For $a_1 \ge 4$, we get again by using that $p \le 2^{p-1}$ for all $p \ge 1$,
\[ 
3R_1(w) \le  3\cdot 2^{m-s_3}(a_1-a_2+a_3-1) \le 3 \cdot 2^{m-2a_2-2} \le 2^{m-3}+2^{m-4}.
\]
So, we have obtained that
\begin{equation}\label{q:j=1}
3R_1(w) \le \begin{cases}
2^{m-3} + 2^{m-4}, & \text{if } a_1\ge 4,\\
2^{m-5} + 2^{m-6}, & \text{if } a_1 =3.
\end{cases}
\end{equation}
Now consider $R_j(w)$ for $2 \le j \le n$. Note that the terms $1+\sum_{\ell=1}^{2j}(-1)^\ell a_\ell $ and $\sum_{\ell=1}^{2j}(-1)^{\ell+1} a_\ell-1$ have opposite signs, so that we only need to consider one of them. First assume that $1+ \sum_{\ell=1}^{2j}(-1)^\ell a_\ell >0$. Then, since $p \le 2^{p-1}$ for all $p \ge 1$ and $s_k \ge a_1 + k-1$ for all $k$, we get
\[ \begin{split}
R_j(w) \le \ & a_{2j+1}2^{m-s_{2j+1}} + 2^{m-s_{2j}} \Big( 1+ \sum_{\ell=1}^{2j}(-1)^\ell a_\ell \Big) \\
\le & 2^{m-s_{2j}-1} + 2^{m-2(a_1+a_3+\cdots + a_{2j-1})} \le 2^{m-a_1-2j} + 2^{m-2a_1-2j+2}.
\end{split}\]
On the other hand, if $-1 + \sum_{\ell=1}^{2j}(-1)^{\ell+1} a_\ell>0$, then
\[ \begin{split}
R_j(w) \le \ & a_{2j+1}2^{m-s_{2j+1}} + 2^{m-s_{2j+1}}\Big( -1+ \sum_{\ell=1}^{2j}(-1)^{\ell+1} a_\ell \Big)\\
\le \ & 2^{m-s_{2j}-1} + 2^{m-2(a_2+a_4+ \cdots +a_{2j}+1)} \le 2^{m-a_1-2j} + 2^{m-2j-2}.
\end{split}\]
So, in all cases we have $R_j(w) \le 2^{m-a_1-2j} + 2^{m-2j-2}$. Then, since $a_1 \ge 3$,
\begin{equation}\label{q:sum} \begin{split}
3\sum_{j=2}^n R_j(w) \le 3 \sum_{j=2}^n (2^{m-3-2j}+2^{m-2-2j}) = 3\sum_{j=6}^{3+2n}2^{m-j} < 3 \cdot 2^{m-5} = 2^{m-4}+2^{m-5}.
\end{split}\end{equation}
Putting \eqref{q:est1}, \eqref{q:j=1} and \eqref{q:sum} together leads to the following. For $a_1 \ge 5$ it follows from $\eta(w)=0$ that $m \ge 10$, so that
\[ \begin{split}
2^{m+1} + 2^m+2^2 & +3(a_1-1)2^{m-a_1}+3\sum_{j=1}^n R_j(w)\\
 \le \ & 2^{m+1} + 2^m+2^2 + 2^{m-2} + 2^{m-3} + 2^{m-3} + 2^{m-4} + 2^{m-4}+2^{m-5}\\
=\ & 2^{m+1} + 2^m + 2^{m-1} + 2^{m-3} +  2^{m-5} + 2^2 < 2^{m+2}.
 \end{split}\]
For $a_1 =4$ we have $m \ge 8$ and we obtain
\[ \begin{split}
2^{m+1} + 2^m+2^2 & +3(a_1-1)2^{m-a_1}+3\sum_{j=1}^n R_j(w)\\
 \le \ & 2^{m+1} + 2^m+2^2 + 2^{m-1}+2^{m-4} + 2^{m-3} + 2^{m-4} + 2^{m-4} + 2^{m-5}\\
 =\ & 2^{m+1} + 2^m + 2^{m-1} + 2^{m-2} +2^{m-4} + 2^{m-5}+ 2^2 < 2^{m+2}.
\end{split}\]
For $a_1 =3$ we have $m \ge 6$ and get
\[ \begin{split}
2^{m+1} + 2^m+2^2 & +3(a_1-1)2^{m-a_1}+3\sum_{j=1}^n R_j(w)\\
 \le \ & 2^{m+1} + 2^m+2^2 + 2^{m-1}+2^{m-2} + 2^{m-5} + 2^{m-6} + 2^{m-4} + 2^{m-5}\\
 =\ & 2^{m+1} + 2^m + 2^{m-1} + 2^{m-2} +2^{m-3} + 2^{m-6}+ 2^2 < 2^{m+2}.
\end{split}\]
Hence, in all cases the inequality from \eqref{q:ineq} is satisfied and we have $\mu_\eta \big( \big[ -\frac12, \frac12 \big]\big) < \frac23$.

\vskip .2cm
\noindent {\bf (II)} $a_1=2$: In this case $w$ starts with 1101 and cannot have more than two consecutive 0's or 1's. Since $\eta < \frac65$ we have $w0^\infty \succ 1(10)^\infty$, so there is some $k \ge 1$, such that
\[ w= 1(10)^k110 w_{2k+5} \cdots w_m.\]
Since $a_{2j+1}2^{m-s_{2j}+1} = 2^{m-s_{2j}-1}$ for all $1 \le j \le n$ and $a_1=a_2+1$ we have that
\[ R_j(w) = 2^{m-s_{2j}-1} + 2^{m-s_{2j}} (1-2^{-a_{2j+1}})(-a_3+a_4- \cdots - a_{2j-1}+a_{2j}).\]
We further know that $3(a_1-1)2^{m-a_1}=2^{m-1}+2^{m-2}$. Since for any $2 \le j \le k$ we have $-a_3 + a_4 - \cdots -a_{2j-1}+a_{2j} =0$, it follows that
\[ 3\sum_{j=2}^k R_j(w) = 3\sum_{j=2}^k 2^{m-s_{2j}-1} = \sum_{j=5}^{2k+2}2^{m-j}.\]
Hence, to show that inequality \eqref{q:ineq} holds we would like to have an upper bound for
\[ Q(w) = \sum_{j=-1}^{2k+2} 2^{m-j}+2^2 + 3\sum_{j=k+1}^n \big( 2^{m-s_{2j}-1} + 2^{m-s_{2j}} (1-2^{-a_{2j+1}})(-a_3+a_4- \cdots - a_{2j-1}+a_{2j}) \big).
\]
The terms $-a_3 + a_4 - \cdots -a_{2j-1}+a_{2j}$ are largest if we see blocks 00 before we see blocks 11. Since $\psi(1(10)^k11) = 1(10)^k110(01)^{k+1}$, primitivity implies that each occurrence of 00 must be followed by $(10)^k$ or $(10)^j11$ for some $j < k$. This in turn implies that there is no primitive word $w$ with $\eta(w)=0$ and $m < 2(2k+3)$. Hence, we consider only words of even length at least equal to $2(2k+3)$ and for each such word $w$ there is a unique $p\ge 1$, such that its length $m$ satisfies $(p+1)(2k+3) \le m < (p+2)(2k+3)$. We consider a number of cases.

\vskip .2cm
\noindent If $p$ is odd, then
\[ Q(w) \le \begin{cases}
Q\big( 1(10)^k11 ( 0(01)^{k+1})^p \big), & \text{if } m = (p+1)(2k+3),\\
Q\big( 1(10)^k 11 (0 (01)^{k+1})^p 0 (01)^i 1\big), & \text{if } m = (p+1)(2k+3) + 2i+2, \, 0 \le i \le k.
\end{cases}\]
If $p$ is even instead, then 
\[ Q(w) \le \begin{cases}
Q\big( 1(10)^k11 ( 0(01)^{k+1})^p 1\big), & \text{if } m = (p+1)(2k+3)+1,\\
Q\big( 1(10)^k 11 (0 (01)^{k+1})^p 0 (01)^i \big), & \text{if } m = (p+1)(2k+3) + 2i+1, \, 0 \le i \le k.
\end{cases}\]
We begin by computing an upper bound for $Q(w^\diamondsuit)$, where $w^\diamondsuit=1(10)^k11(0(01)^{k+1})^p$.
It holds that $a_{2j+1}=1$ for all $j \ge k+1$, so that we can rewrite $Q(w^\diamondsuit)$ as
\begin{equation}\label{q:R2}
Q(w^\diamondsuit) = \sum_{j=-1}^{2k+2} 2^{m-j}+2^2 + 3\sum_{j=k+1}^n 2^{m-s_{2j}-1}(1-a_3 + a_4 - \cdots -a_{2j-1}+a_{2j}).
\end{equation}
For each $1 \le j \le p$ and $0 \le \ell \le k$ we have
\[ -a_3+a_4- \cdots -a_{2jk+2j+2\ell-1}+a_{2jk+2j+2\ell} = j-1\]
and $s_{2jk+2j+2\ell} = 2jk+3j+2\ell+3$. Putting this in \eqref{q:R2} gives
\[ \begin{split}
Q(w^\diamondsuit) =\ & \sum_{j=-1}^{2k+2} 2^{m-j}+2^2 + 3\sum_{j=1}^p j \sum_{\ell=0}^k 2^{m-2jk-3j-2\ell-3}\\
= \ & \sum_{j=-1}^{2k+2} 2^{m-j}+2^2 + 3 \sum_{\ell=0}^k 2^{m-2\ell-3} \sum_{j=1}^p j  2^{-(2k+3)j}.
\end{split}\]
Since
\[  \sum_{j=1}^p j  2^{-(2k+3)j} < \sum_{j=1}^\infty (j+1)2^{-(2k+3)} - \sum_{j=1}^\infty 2^{-(2k+3)j} = \frac{2^{2k+3}}{(2^{2k+3}-1)^2} < \frac1{2^{2k+2}},\]
we have
\begin{equation}\label{q:sumj}
3 \sum_{\ell=0}^k 2^{m-2\ell-3} \sum_{j=1}^p j  2^{-(2k+3)j} < 2^{m-2k-5} \Big(4- \frac1{2^{2k}}\Big) < 2^{m-2k-3}.
\end{equation}
From $p \ge 1$ it follows that $m = (p+1)(2k+3) \ge 2(2k+3)$, so that
\[ Q(w^\diamondsuit) < \sum_{j=-1}^{2k+3} 2^{m-j}+2^2 < 2^{m+2}.\]
Now consider $Q(w^\spadesuit)$, where $w^\spadesuit = 1(10)^k 11 (0 (01)^{k+1})^p 0 (01)^i 1$ for some $0 \le i \le k$. Then $a_{2j+1}=1$ for all $k+1 \le j < n$ and $a_{2n+1}=2$. With the same computations as above this gives
\[ \begin{split}
Q(w^\spadesuit) =\ &  \sum_{j=-1}^{2k+2} 2^{m-j}+2^2 + 3\sum_{j=1}^p j \sum_{\ell=0}^k 2^{m-2jk-3j-2\ell-3} + 3\sum_{\ell=0}^{i-1} (p+1) 2^{m-2(p+1)k-3(p+1)-2\ell-3}\\
& + 3 \cdot 2^{m-2(p+1)k-3(p+1)-2i-3} + 3p \cdot 2^{m-2(p+1)k-3(p+1)-2i-2} (1-2^{-2})\\
\le \ & \sum_{j=-1}^{2k+2} 2^{m-j}+2^2 + 3\sum_{j=1}^{p+1} j \sum_{\ell=0}^k 2^{m-2jk-3j-2\ell-3} + 3p\cdot 2^{m-2(p+1)k-3(p+1)-2i-4}.
\end{split}\]
Using \eqref{q:sumj} and the fact that $p \le 2^{p-1}$ we get that
\[
Q(w^\spadesuit) <  \sum_{j=-1}^{2k+3} 2^{m-j}+2^2 +  2^{m-2(p+1)k-2p-2i-7} + 2^{m-2(p+1)k-2p-2i-8}.
\]
Since $p \ge 1$ we obtain that $2(p+1)k+2p+2i+7 \ge 4k+ 9 >2k+5$ and it follows that $Q(w^\spadesuit) < 2^{m+2}$. Next consider $Q(w^\heartsuit)$, where $w^\heartsuit = 1(10)^k11 ( 0(01)^{k+1})^p 1$, so that $m=2pk + 3p +2k +4$. Then again we have $a_{2j+1}=1$ for all $k+1 \le j < n$ and $a_{2n+1}=2$. In the same way as above this gives
\[ \begin{split}
Q(w^\heartsuit) =\ & \sum_{j=-1}^{2k+2} 2^{m-j}+2^2 + 3\sum_{j=1}^{p-1} j \sum_{\ell=0}^k 2^{m-2jk-3j-2\ell-3} + 3\sum_{\ell=0}^{k-1} p 2^{m-2pk-3p-2\ell-3}\\
& + 3 \cdot 2^{m-2pk-3p-2k-3} + 3(p-1) \cdot 2^{m-2pk-3p-2k-2} (1-2^{-2})\\
<\ &  \sum_{j=-1}^{2k+3} 2^{m-j}+2^2 + 3(p-1)\cdot 2^{m-2pk-3p-2k-3}.
\end{split}\]
Since $p \ge 2$, we use $p-1 \le 2^{p-2}$ to obtain that
\[ Q(w^\heartsuit) < \sum_{j=-1}^{2k+3} 2^{m-j}+2^2 +  2^{m-2pk-2p-2k-4} +  2^{m-2pk-2p-2k-5} < 2^{m+2}.\]
Finally, consider $Q(w^{\clubsuit})$, where $w^\clubsuit = 1(10)^k 11 (0 (01)^{k+1})^p 0 (01)^i$ for some $0 \le i \le k$. Then $a_{2j+1}=1$ for all $j \ge k+1$, so that
\[ \begin{split}
Q(w^\clubsuit) =\ & \sum_{j=-1}^{2k+2} 2^{m-j}+2^2 + 3\sum_{j=1}^p j \sum_{\ell=0}^k 2^{m-2jk-3j-2\ell-3} + 3\sum_{\ell=0}^i (p+1) 2^{m-2(p+1)k-3(p+1)-2\ell-3}\\
<\ & 
\sum_{j=-1}^{2k+3} 2^{m-j}+2^2 < 2^{m+2}.
\end{split}\]
Since we have considered all possible values of $m$, this finishes the proof.
\end{proof}

\begin{remark}{\rm
Note that $\eta \uparrow \frac65$ corresponds to $k \to \infty$ in the previous proof, i.e., the starting block $1(10)^k110$ becomes arbitrarily large. The proof also shows that in that case $\mu_\eta \big( \big[ -\frac12, \frac12 \big] \big) \uparrow \frac23$. 
}\end{remark}

\section*{Appendix A}
In this section we derive the formula for the invariant density (\ref{q:density}) from results from \cite{Kop90}, which apply to maps $F:[0,1]\to [0,1]$ that have $F(0), F(1) \in \{0,1\}$. Let the map $F:[0,1] \to [0,1]$ and the extended version of $S_\eta$, $\bar S_\eta:[-\eta, \eta] \to [-\eta, \eta]$, be given by
\[ F (x)= \left\{
\begin{array}{ll}
2x, & \text{if }x \in \big[0, \frac12-\frac1{4\eta}\big),\\
\\
2x - \frac12, & \text{if }x \in \big[ \frac12 - \frac1{4\eta}, \frac12 + \frac1{4\eta} \big],\\
\\
2x-1, & \text{if }x \in \big( \frac12 + \frac1{4\eta} , 1\big],
\end{array}\right.
\quad \text{ and } \quad
\bar S_\eta (x) = \left\{
\begin{array}{ll}
2x+\eta, & \text{if }x \in \big[-\eta, -\frac12\big),\\
\\
2x , & \text{if }x \in \big[ -\frac12 , \frac12 \big],\\
\\
2x-\eta, & \text{if }x \in \big( \frac12 , \eta\big].
\end{array}\right.
\]
Then $F$ is conjugate to $\bar S_\eta$ with conjugacy $\phi: [-\eta, \eta] \to [0,1], \, x \mapsto \frac{x}{2\eta} + \frac12$. The critical points of $F$ are $\phi\big( \frac12 \big) = \frac12 + \frac1{4\eta}$ and $\phi\big( -\frac12 \big) = \frac12 - \frac1{4\eta}$. We will calculate the invariant density for $F$.

\medskip
For $F$ we have $F (1-x) =-1-F(x)$. Now, using the notation from \cite{Kop90}, define the points $a_1,a_2,b_1$ and $b_2$, by
\begin{align*}
a_1 &= 2\Big(\frac12 - \frac1{4\eta} \Big) = 1-\frac1{2\eta}, \quad & b_1 &= 2\Big(\frac12 - \frac1{4\eta} \Big) - \frac12 = \frac12-\frac1{2\eta},\\
a_2 &= 2\Big(\frac12 + \frac1{4\eta} \Big) - \frac12 = \frac12 + \frac1{2\eta} = 1-b_1, \quad & b_2 &= 2\Big(\frac12 + \frac1{4\eta} \Big)-1 = \frac1{2\eta} = 1-a_1. 
\end{align*}
These are the images of the critical points. The critical points divide the unit interval into three pieces, called $I_1$, $I_2$ and $I_3$ in \cite{Kop90}, so
$I_1 = \big[0, \frac12-\frac1{4\eta}\big)$, $I_2=\big[ \frac12 - \frac1{4\eta}, \frac12 + \frac1{4\eta} \big] = 1-I_2$ and $I_3 = \big( \frac12 + \frac1{4\eta} , 1\big]= 1-I_1$. Define
\[ KI_n (y) = \sum_{t \ge 0} \frac{1}{2^{t+1}} 1_{I_n} (F^t (y)).\]
Then
\begin{eqnarray*}
KI_1 (a_1) = KI_3 (b_2), & KI_2 (a_1) = KI_2 (b_2), & KI_3 (a_1) = KI_1 (b_2),\\
KI_1 (a_2) = KI_3 (b_1), & KI_2 (a_2) = KI_2 (b_1), & KI_3 (a_2) = KI_1 (b_1).\\
\end{eqnarray*}
Now define a $3 \times 2$ matrix $M = ( \mu_{i,j} )$ with entries
\begin{eqnarray*}
\mu_{1,1} &=& \frac12 + \frac12 KI_1(a_1) - \frac12 KI_1(b_1) = \frac12 + \frac12 KI_3(a_2) - \frac12 KI_3(b_2) = - \mu_{3,2},\\
\mu_{2,1} &=& -\frac12 + \frac12 KI_2(a_1) - \frac12 KI_2(b_1) = -\frac12 + \frac12 KI_2(b_2) - \frac12 KI_2(a_2) = - \mu_{2,2},\\
\mu_{3,1} &=& \frac12 KI_3(a_1) - \frac12 KI_3(b_1) = \frac12 KI_1(b_2) - \frac12 KI_1(a_2) = - \mu_{1,2}.
\end{eqnarray*}
So the matrix $M$ has the following form:
\[ M= \begin{pmatrix} a & -c\\ b &-b\\ c & -a \end{pmatrix}.\]
\cite[Lemma 1]{Kop90} gives us two other relations: For each $y$,
\begin{equation}\label{q:k7}
\frac{1}{2} KI_2(y)+KI_3(y) = y \quad \text{ and } \quad KI_1(y) + KI_2(y) +KI_3 (y) = 1.
\end{equation}
From (\ref{q:k7}) we can derive the following two relations. For $j=1,2$,
\begin{eqnarray}
\frac{1}{2} \mu_{2,j}+\mu_{3,j} &=& -\frac14 + \frac14 KI_2(a_j) -\frac14 KI_2(b_j) + \frac12 KI_3(a_j)-\frac12 KI_3(b_j) \label{q:k12} \\
&=& -\frac14 +\frac12a_j - \frac12 b_j = 0,\nonumber \\
\mu_{1,j} + \mu_{2,j} +\mu_{3,j} & = &0. \label{q:k13}
\end{eqnarray}
From (\ref{q:k12}) we get that $a=c=-\frac{b}{2}$, so the matrix $M$ becomes
\[ M= \begin{pmatrix} -\frac{b}{2} & \frac{b}{2}\\ b &-b\\  -\frac{b}{2} & \frac{b}{2}\end{pmatrix}.\]
There is a one-to-one correspondence between the solutions of the equation $M \cdot \gamma =0$ and the space of invariant measures. Since $S_\eta$ has a unique absolutely continuous invariant probability measure, $M$ has at least one non-zero entry. Any vector $\gamma$ such that $M \cdot \gamma =0$ satisfies $\gamma_1 = \gamma_2$.

\vskip .2cm
According to \cite[Theorem 1]{Kop90} a density of the invariant measures for $F$ is given by
\begin{eqnarray*}
h(x) &=& \frac12 \left( 1_{[0,a_1)}(x) - 1_{[0, b_1)}(x) + \sum_{n \ge 0} \frac{1}{2^{n+1}} (1_{[0, F^{n+1} a_1)}(x) - 1_{[0, F^{n+1}b_1)}(x)) \right)\\
&& \quad + \frac12 \left( 1_{[0,a_2)}(x) - 1_{[0, b_2)}(x) + \sum_{n \ge 0} \frac{1}{2^{n+1}} (1_{[0, F^{n+1} a_2)}(x) - 1_{[0, F^{n+1}b_2)}(x)) \right)\\
 &=& \sum_{n \ge 0} \frac{1}{2^{n+1}} \Big(1_{[0, F^n a_1)}(x) - 1_{[0, F^n b_1)}(x) + 1_{[0, F^n a_2)}(x) - 1_{[0, F^n b_2)}(x) \Big)\\
 &=& \sum_{n \ge 0} \frac{1}{2^{n+1}} \Big(1_{[F^n b_1, F^n a_1)}(x) - 1_{[F^n a_1, F^n b_1)}(x) + 1_{[F^n b_2, F^n a_2)}(x) - 1_{[F^n a_2, F^n b_2)}(x) \Big).
\end{eqnarray*}
When translated back to the map $S_\eta$, this gives the formula from (\ref{q:density}).

\section*{Appendix B}

The content of this section is solely due to D. Kong \cite{K18},  it gives the proof of Proposition \ref{equivalenceDerong}.   For ease of notation we denote $\overline{w_1\ldots w_n}=(1-w_1)\ldots (1-w_n)$.
\begin{proof} (Proof of Proposition \ref{equivalenceDerong}) Suppose $w=w_1\cdots w_m$ is primitive. To show that $w$ is admissible, by Definition~\ref{d:primitive}(ii) it suffices to show that  
\begin{equation}\label{eq:1}
\overline{w_1\ldots w_{m-k}}\prec  w_{k+1}\ldots w_m \quad\textrm{for all}\quad 0\le k\le m-1. 
\end{equation}
Since $w_m=1$, (\ref{eq:1}) holds for $k=0$. Suppose (\ref{eq:1}) fails for some  minimal $1\le k\le m-1$. Then $w_{k+1}\ldots w_m\preceq \overline{w_1\ldots w_{m-k}}$. By the minimality of $k$ we have $w_k=1$.   Then by using $w_m=1$ it follows that 
\[
 w_1\ldots w_k0^{\infty}\prec w_1\ldots w_m0^{\infty}\preceq w_1\ldots w_k\overline{w_1\ldots w_{m-k}}0^{\infty}\prec w_1\ldots w_k\overline{w_1\ldots w_{k-1}}1 0^{\infty}.
\]
which contradicts primitivity. Hence, $w_1\ldots w_m$ is an admissible word. Conversely, assume $w_1\cdots w_m$ is admissible.  Then, for any $0\le k\le m-1$ we have 
\begin{equation}\label{eq:2}
\overline{w_1\ldots w_{m-k}}\prec w_{k+1}\ldots w_m\preceq w_1\ldots w_{m-k}.
\end{equation}
So to prove the result, we only need to check conditions (i) and (iii) of Definition \ref{d:primitive}. Taking $k=m-1$ in (\ref{eq:2}) it follows that $w_1=w_m=1$. If $m=2$, then $w_1=w_2=w_m=1$. Let $m\ge 3$ and assume $w_2=0$, then by (\ref{eq:2}) it follows that 
\[
w_1\ldots w_m=(10)^{\ell} 1\quad\textrm{with}\quad \ell\ge 1,
\]
leading to a contradiction with (\ref{eq:2}) since
$
w_2\ldots w_m=(01)^{\ell}=\overline{w_1\ldots w_{m-1}}.
$
Therefore, $w_2=1$. It remains to prove condition (iii) Definition \ref{d:primitive}. Suppose it fails and then there exists a word $b_1\ldots b_j$  such that 
\begin{equation}
\label{eq:3}
b_1\ldots b_j0^{\infty}\prec w_1\ldots w_m0^{\infty}\prec b_1\ldots b_j\overline{b_1\ldots b_{j-1}}1 0^{\infty}.
\end{equation}
We can assume with no loss of generality that $b_j=1$. Then by (\ref{eq:3}) we have $j<m$ and $b_1\ldots b_j=w_1\ldots w_j$.
Furthermore, 
\begin{align*}
w_{j+1}\ldots w_m&\preceq \overline{b_1\ldots b_{m-j}}=\overline{w_1\ldots w_{m-j}}&&\textrm{if }m<2j;\\
w_{j+1}\ldots w_{2j}&\prec \overline{b_1\ldots b_{j-1}}1=\overline{w_1\ldots w_{j-1}}1&&\textrm{if }m\ge 2j.
\end{align*}
If $m<2j$, then $w_{j+1}\ldots w_m\preceq \overline{w_1\ldots w_{m-j}}$, leading to a contradiction with (\ref{eq:2}). If $m=2j$, then 
\[w_{j+1}\ldots w_{m-j}=w_{j+1}\ldots w_{2j}\preceq \overline{w_1\ldots w_j}=\overline{w_1\ldots w_{m-j}},\]
again leading to a contradiction with (\ref{eq:2}). If $m> 2j$, then $w_{j+1}\ldots w_{2j}\preceq\overline{w_1\ldots w_j}$. By (\ref{eq:2}) with $k=j$ it gives that
\[
w_1\ldots w_{2j}=w_1\ldots w_j\overline{w_1\ldots w_j}.
\]
Again by iterations of (\ref{eq:2}) we conclude that 
\[
w_1\ldots w_m=(w_1\ldots w_j\overline{w_1\ldots w_j})^{\ell} w_1\ldots w_p\quad\textrm{with}\quad \ell\ge1\textrm{ and } 1\le p<2j.
\]
Then
\[
w_{m-p-j+1}\ldots w_m=\overline{w_1\ldots w_j}w_1\ldots w_p=\overline{w_1\ldots w_{p+j}},
\]
leading to a contradiction with (\ref{eq:2}). Thus condition (iii) of Definition \ref{d:primitive}, and $w_1\ldots w_m$ is primitive.
\end{proof}

\end{document}